\documentclass[reqno,11pt]{amsart}

\usepackage{amsthm,amsfonts, amssymb, amscd}
\usepackage{euscript}




\usepackage{tikz-cd}




\usepackage{accents}



\usepackage{hyperref}
\hypersetup{%
   linktoc=page
}

\setcounter{tocdepth}{1}

\let\oldtocsection=\tocsection \let\oldtocsubsection=\tocsubsection
\renewcommand{\tocsection}[2]{\hspace{0em}\oldtocsection{#1}{#2}}
\renewcommand{\tocsubsection}[2]{\hspace{2em}\oldtocsubsection{#1}{#2}}


\setlength{\textwidth}{5.7 in} \setlength{\textheight}{8.0 in}
\hoffset=-0.25in 
\voffset=-.5in

\newtheorem{theorem}{Theorem}[section] 
\newtheorem{lemma}[theorem]{Lemma}
\newtheorem{corollary}[theorem]{Corollary} 
\newtheorem{proposition}[theorem]{Proposition}

\theoremstyle{remark} \newtheorem{remark}[theorem]{Remark}
\theoremstyle{remark} \newtheorem{example}[theorem]{Example}
\theoremstyle{definition}

 \theoremstyle{definition}

 \theoremstyle{definition}

\newtheorem{definition}[theorem]{Definition}

\numberwithin{equation}{section}



 
\newcommand{\R}{\mathbb{ R}} 
\newcommand{\Z}{\mathbb{ Z}} 
 

\newcommand{\Hom}{\operatorname{Hom}}

\newcommand{\spec}{\operatorname{Spec}}
\newcommand{\Char}{\operatorname{Char}}




 \newcommand{\fm}{{\mathfrak m}}

 \newcommand{\ft}{{\mathfrak t}}

\newcommand{\ga}{\alpha} \newcommand{\gb}{\beta}
\newcommand{\gd}{\delta} \newcommand{\gD}{\Delta}
 \newcommand{\gre}{\epsilon}
\renewcommand{\gg}{\gamma} 
 \newcommand{\gl}{\lambda}
 \newcommand{\gm}{\mu} \newcommand{\gn}{\nu}
 
\newcommand{\gP}{\Phi}  \newcommand{\gs}{\sigma}
 \newcommand{\gt}{\theta}
 \newcommand{\gz}{\zeta}


 \newcommand{\cf}{\mathcal{F}}
 \newcommand{\ch}{\mathcal{H}}

\newcommand{\co}{\mathcal{O}}
 
\newcommand{\crr}{\mathcal{R}}
\newcommand{\ct}{\mathcal{T}}


 \newcommand{\vq}{\mathbf{q}}
\newcommand{\vr}{\mathbf{r}} \newcommand{\vs}{\mathbf{s}}
\newcommand{\vt}{\mathbf{t}} \newcommand{\ve}{\mathbf{e}}





\newcommand{\wh}{\widehat}

\newcommand{\sym}{\operatorname{Sym}}





%

\newcommand{\pam}{\Phi_{\operatorname{amb}}}

\newcommand{\pta}{\Phi_{\tan}}

\newcommand{\pno}{\Phi_{\operatorname{nor}}}

 \newcommand{\cone}{\operatorname{Cone}}
 \newcommand{\zcone}{\operatorname{Cone}_{\Z}}

 \newcommand{\pull}{i_V^{*}[\co_Y]_V}


 \newcommand{\reu}{\widetilde{\chi}}
 \newcommand{\oreu}{\widetilde{\chi}^{\circ}}
 \newcommand{\bd}{\partial} 

\newcommand{\vamkb}{(U^-(x)\cap U)\cdot xB}
\newcommand{\vamkm}{(U_P^-(x)\cap U^-)}
\newcommand{\vamkmb}{(U^-(x)\cap U^-)}

\newcommand{\xw}{X^w}
\newcommand{\xwp}{X_P^w}

\newcommand{\kxw}{Y_x^w}
\newcommand{\kxwp}{Y_{x,P}^w}

\newcommand{\pullk}{i_{V}^{*}[\co_{\kxw}]_{V}}
\newcommand{\pullkk}{i_V^{*}[\co_{\kxw}]_V}
\newcommand{\pullb}{i_x^{*}[\co_{\xw}]_{G/B}}
\newcommand{\pully}{i_x^{*}[\co_{Y_x^w}]}

\newcommand{\ws}{_{w,\vs}}

\newcommand{\es}{\emptyset}

\newcommand{\Y}{Y}


\newcommand{\zring}[1]{\mathbb{Z}[e^{-\lambda}, \lambda\in {#1}]} 
\newcommand{\tsy}{{T_xY}}
\newcommand{\tcy}{{TC_xY}}
\newcommand{\fld}{k}


 \allowdisplaybreaks

\makeatletter
\@namedef{subjclassname@1991}{Subject}
\makeatother


\begin{document}
\parskip=4pt \baselineskip=14pt

\title[Tangent spaces to Schubert varieties]{Equivariant K-theory and
  Tangent spaces to Schubert varieties}
\subjclass{Primary 14M15; Secondary 05E14.  Keywords: K-theory, Schubert variety, flag variety, Grassmannian, cominuscule}


\author{William Graham} \address{ Department of Mathematics,
  University of Georgia, Boyd Graduate Studies Research Center,
  Athens, GA 30602 } \email{wag@uga.edu}

\author{Victor Kreiman} \address{ Department of Mathematics,
  University of Wisconsin - Parkside, Kenosha, WI 53141 }
\email{kreiman@uwp.edu}

\date{\today}

\begin{abstract}
  Tangent spaces to Schubert varieties of type $A$ were characterized
  by Lakshmibai and Seshadri \cite{LaSe:84}.  This result was extended
  to the other classical types by Lakshmibai \cite{Lak:95},
  \cite{Lak:00}, and \cite{Lak2:00}.  We give a uniform
  characterization of tangent spaces to Schubert varieties in
  cominuscule $G/P$. Our results extend beyond cominuscule $G/P$; they
  describe the tangent space to any Schubert variety in $G/B$ at a
  point $xB$, where $x$ is a cominuscule Weyl group element in the
  sense of Peterson.  Our results also give partial information about
  the tangent space to any Schubert variety at any point.  Our method
  is to describe the tangent spaces of Kazhdan-Lusztig varieties, and
  then recover results for Schubert varieties.  Our proof uses a
  relationship between weights of the tangent space of a variety with
  torus action, and factors of the class of the variety in torus
  equivariant $K$-theory. The proof relies on a formula for Schubert
  classes in equivariant $K$-theory due to Graham \cite{Gra:02} and
  Willems \cite{Wil:06}, as well as a theorem on subword complexes due
  to Knutson and Miller \cite{KnMi:04}, \cite{KnMi:05}.
\end{abstract}

\maketitle

\tableofcontents

\section{Introduction}\label{s.introduction}

One goal in the study of Schubert varieties is to understand their
singularities. A related goal is to understand their Zariski tangent
spaces, or equivalently, the weights of their Zariski tangent spaces
at fixed points of the action of a maximal torus.  A description of
these tangent spaces in type A was given by Lakshmibai and Seshadri
\cite{LaSe:84}. In \cite{Lak:95}, \cite{Lak:00}, and \cite{Lak2:00},
Lakshmibai extended this result to all classical types (see also
\cite[Chapter 5]{BiLa:00}). We give a different description of tangent
spaces to Schubert varieties, which is uniform across all types. Our
description, however, recovers only part of the tangent space, except
in certain cases, such as Schubert varieties in cominuscule $G/P$, in
which it recovers the entire tangent space. Our results hold for an
algebraically closed ground field $k$ of characteristic 0.  The
results of \cite{LaSe:84}, \cite{Lak:95}, \cite{Lak:00}, and
\cite{Lak2:00} hold in arbitrary characteristic.

Rather than studying Schubert varieties directly, we focus on the
smaller Kazhdan-Lusztig varieties, which differ locally only by a
well-prescribed affine space.  We study general Kazhdan-Lusztig
varieties, but we do not attempt to recover all weights of the tangent
space. Rather, we restrict our attention to those weights of the
tangent space which are integrally indecomposable in an ambient space
$V$, which is to say that they cannot be written as the sum of other
weights of $V$. We characterize such weights. When all weights of $V$
are integrally indecomposable in $V$, our characterization captures
all weights of the tangent space. This occurs, for example, for
Kazhdan-Lusztig varieties in cominuscule $G/P$, or more generally, for
any Kazhdan-Lusztig variety at a $T$-fixed point (i.e., point of
tangency) which is a cominuscule Weyl group element.

\subsection{Statement of results}\label{ss.statement}

Let $G$ be a semisimple algebraic group defined over an algebraically
closed field $k$ of characteristic 0. Let $P\supseteq B\supseteq T$ be
a parabolic subgroup, Borel subgroup, and maximal torus of $G$
respectively. We denote the set of weights of a representation $E$ of
$T$ by $\gP(E)$.  Let $W$ be the Weyl group of $(G,T)$, and $S$ the
set of simple reflections in $W$ relative to $B$.

Fix $w\leq x\in W$. Let $\xw$ be the Schubert variety
$\overline{B^-wB}$, and $\kxw$ the Kazhdan-Lusztig variety $BxB\cap
\overline{B^-wB}$, in $G/B$. The Kazhdan-Lusztig variety $\kxw$ (and
thus its tangent space at $x$, $T_x\kxw$) is an affine subvariety of
an ambient space $V$ in $G/B$ with weights $\gP(V)=I(x^{-1})$, the
inversion set of $x^{-1}$. If $\vs=(s_1,\ldots,s_l)$, $s_i\in S$, is a
reduced expression for $x$, then the elements of $I(x^{-1})$ are given
explicitly by the formula $\gg_i=s_1\cdots s_{i-1}(\ga_i)$,
$i=1,\ldots, l$, where $\ga_i$ is the simple root corresponding to
$s_i$.

Our main result is the following theorem (see Theorem
\ref{t.summary}):

\noindent \textbf{Theorem A}. {\it Suppose $\gg_j$ is integrally
  indecomposable in $I(x^{-1})$. Then the following are equivalent:
\begin{enumerate}
\item $\gg_j\in \gP(T_xY_x^w)$.
\item There exists a reduced subexpression of
  $(s_1,\ldots,\wh{s}_j,\ldots,s_l)$ for $w$.
\item The Demazure product of $(s_1,\ldots,\wh{s}_j,\ldots,s_l)$ is
  greater than or equal to $w$.
\end{enumerate}
} 

This theorem, which applies to Kazhdan-Lusztig varieties in $G/B$,
extends to Schubert varieties and to $G/P$. Moreover, when $x$ is a
cominuscule Weyl group element of $W$, all $\gg_j$ are integrally
indecomposable in $I(x^{-1})$, so Theorem A recovers all weights of
the tangent space.

\begin{remark}\label{r.not-int-dec}
  If $\gg_j$ is not integrally indecomposable in $I(x^{-1})$, then
  (ii) and (iii) of Theorem A are still equivalent, but (i) is no
  longer equivalent to (ii) and (iii) in general.
\end{remark}

\begin{remark}\label{r.tinv-cur}
  Let us denote by $TE_xY_x^w$ the span of the tangent lines to
  $T$-invariant curves through $x$ in $Y_x^w$; then
  $TE_xY_x^w\subseteq T_x Y_x^w$. It is known that condition (iii) of
  Theorem A, with the Demazure product of
  $(s_1,\ldots,\wh{s}_j,\ldots,s_l)$ replaced by the ordinary product
  $s_1\cdots\wh{s}_j\cdots s_l$, gives a characterization of all
  weights of $TE_xY_x^w$ (and not just the integrally indecomposable
  weights) \cite{Car:95} \cite{CaKu:03}. Thus, Theorem A can be viewed
  as a 
  characterization of the integrally indecomposable weights of $T_x
  Y_x^w$ which is similar to this known characterization of all
  weights of the smaller space $TE_x Y_x^w$.
\end{remark}

\begin{remark}
  The paper \cite{GrKr:21} proves that in simply-laced types, the
  Demazure product of $(s_1,\ldots,\wh{s}_j,\ldots,s_l)$ of Theorem A
  (iii) is equal to the ordinary product $s_1\cdots \wh{s}_j\cdots
  s_l$, provided that $\gg_j$ is integrally indecomposable in
  $I(x^{-1})$. As a corollary, it is proved that in simply-laced
  types, when $x$ is a cominuscule Weyl group element, $\gP(T_x\xw) =
  \gP(TE_x\xw)$.
\end{remark}

\subsection{Outline of proof}

Our proof of Theorem A uses equivariant $K$-theory. Let us fix
notation and give some basic definitions and properties. If $T$ acts
on a smooth scheme $M$, the Grothendieck group of $T$-equivariant
coherent sheaves (or vector bundles) on $M$ is denoted by $K_T(M)$. If
$N$ is a $T$-stable subscheme of $M$, then the class in $K_T(M)$ of
the pushfoward of the structure sheaf $\co_N$ of $N$ is denoted by
$[\co_N]_M$, or sometimes just $[\co_N]$.  A $T$-equivariant vector
bundle on a point is a representation of $T$, so
$K_T(\{\text{point}\})$ can be identified with $R(T)$, the
representation ring of $T$. The inclusion $i_m:\{m\} \to M$ of a
$T$-fixed point induces a pullback $i_m^{*}:K_T(M)\to
K_T(\{m\})=R(T)$.

Consider for the moment a more general situation than that of the
previous subsection: $V$ a representation of $T$ such that all weights
of $V$ lie in an open half-space and have multiplicity one,
$\Y\subseteq V$ a $T$-stable subscheme, and $x\in \Y$ a $T$-fixed
point. The structure sheaf $\co_{\Y}$ defines a class $[\co_\Y]\in
K_T(V)$. We show that the factors of $i_x^{*}[\co_Y]\in R(T)$ contain
information about the tangent space $T_xY$.  Let us say that
$1-e^{-\gt}$ is a {\it simple factor} of $i_x^{*}[\co_\Y]$ if
$i_x^{*}[\co_\Y]=(1-e^{-\gt})Q$ for some $Q\in R(T)$ which is a
polynomial in $e^{-\gl}$, $\gl\in \gP(V)\setminus\{\gt\}$. We prove
(see Proposition \ref{p.ind-remov}) \vspace{.5em}

\noindent \textbf{Proposition B}. {\it Suppose $\gt$ is integrally
  indecomposable in $\gP(V)$. Then $\gt\in \gP(T_x\Y)$ if and only if
  $1-e^{-\gt}$ is not a simple factor of $i_x^{*}[\co_\Y]$.}
\vspace{.5em}

Now set $V$ and $x$ as in Subsection \ref{ss.statement} and set $Y$ to
be the Kazhdan-Lusztig variety $\kxw$.  For $\gt\in\gP(V)=I(x^{-1})$,
we have $\gt=\gg_j$ for some $j$. Proposition B then becomes
\vspace{.5em}

\noindent \textbf{Proposition C}. {\it Suppose $\gg_j$ is integrally
  indecomposable in $I(x^{-1})$. Then $\gg_j\in \gP(T_x\Y_x^w)$ if and
  only if $1-e^{-\gg_j}$ is not a simple factor of
  $i_x^{*}[\co_{\Y_x^w}]$.} \vspace{.5em}

This characterization of $\gP(T_x\kxw)$ would appear to suffer from a
computational difficulty: determining whether $1-e^{-\gg_j}$ is a
factor of $i_x^{*}[\co_{Y_x^w}]$, let alone whether it is a simple
factor, seems to be a nontrivial problem. It requires some sort of
division algorithm in $R(T)$. One approach would be to search for an
expression for $\pully$ as a sum of terms in which $1-e^{-\gg_j}$
appears explicitly as a factor of each summand. To rule out the
possibility that $1-e^{-\gg_j}$ is a factor of $i_x^{*}[\co_{Y_x^w}]$,
then, one would need to show that no such expression exists. This
would presumably require knowledge of all possible expressions for
$\pully$.

We show a way around this apparent computational difficulty.  When
$\gg_j$ is integrally indecomposable in $I(x^{-1})$, the question of
whether or not $1-e^{-\gg_j}$ is a simple factor of
$i_x^{*}[\co_{Y_x^w}]$ can be answered by using a single expression
for $\pully$ due to Graham \cite{Gra:02} and Willems \cite{Wil:06}:
\begin{equation}\label{e.gw-intro}
  i_x^{*}[\co_{Y_x^w}] = 
  \sum_{\vt\in \ct_{w,\vs}}(-1)^{e(\vt)}\prod_{i\in \vt}(1-e^{-\gg_i})\in R(T),
\end{equation}
where $\ct_{w,\vs}$ is the set of sequences $\vt=(i_1,\ldots,i_m)$,
$1\leq i_1<\cdots<i_m\leq l$, such that $H_{s_{i_1}}\cdots
H_{s_{i_m}}=H_w$ in the 0-Hecke algebra, and $e(\vt)=m-\ell(w)$.  More
precisely, we prove  (see Theorem \ref{t.sk-standard}): \vspace{.5em}

\noindent \textbf{Theorem D}.  {\it Suppose $\gg_j$ is integrally
  indecomposable in $I(x^{-1})$. Then $1-e^{-\gg_j}$ is a simple
  factor of $\pully$ if and only if $1-e^{-\gg_j}$ occurs explicitly
  as a factor of every summand of $\sum_{\vt\in
    \ct_{w,\vs}}(-1)^{e(\vt)}\prod_{i\in \vt}(1-e^{-\gg_i})$, i.e., if
  and only if $j$ belongs to every $\vt\in \ct_{w,\vs}$.  }
\vspace{.5em}

We note that one direction of this theorem follows immediately from
\eqref{e.gw-intro}. Combining Proposition C and Theorem D yields
\vspace{.5em}

\noindent \textbf{Theorem E}. {\it Suppose that $\gg_j$ is integrally
  indecomposable in $I(x^{-1})$. Then $\gg_j\in \gP(T_xY_x^w)$ if and
  only if $j$ does not belong to every $\vt\in \ct_{w,\vs}$.}
\vspace{.5em}

Now the equivalence of (i) and (ii) of Theorem A is essentially a
reformulation of Theorem E, using some properties of 0-Hecke
algebras. The equivalence of (ii) and (iii) is due to Knutson-Miller
\cite[Lemma 3.4 (1)]{KnMi:04}.

The paper is organized as follows. In Section \ref{s.preliminaries},
we recall definitions and properties of equivariant $K$-theory and
weights of tangent spaces to schemes with $T$-actions.  In
Section \ref{s.removable} we prove Proposition B. In Section
\ref{s.euler}, we give a corollary to a theorem by Knutson-Miller on
subword complexes \cite{KnMi:04}, \cite{KnMi:05}.  Our proof of
Theorem D relies on this corollary.  In Section \ref{s.particular}, we
apply the material of the previous sections to Kazhdan-Lusztig
varieties in order to prove Proposition C and Theorems D and E. In
Section \ref{s.partial}, we show how to extend these results to $G/P$
and discuss the case of cominuscule Weyl group elements and
cominuscule $G/P$.

The related paper \cite{GrKr:21} examines {\it rationally
  indecomposable} weights of the ambient space $V$. Rational
indecomposability is a stricter condition than integral
indecomposability, so the set of rationally indecomposable weights is
contained in the set of integrally indecomposable weights. For this
smaller set of weights, \cite{GrKr:21} obtains stronger results.  For
example, it is shown that the elements of $\gP(T_xY_x^w)$ which are
rationally indecomposable in $I(x^{-1})$ lie in $\gP(TE_xY_x^w)$.
Several results of \cite{GrKr:21} rely on those of this paper.

\section{Preliminaries}\label{s.preliminaries}

Let $\fld$ be an algebraically closed field of characteristic 0.  We
work in the category of schemes over $\fld$. A point on a scheme will
always refer to a closed point. If $W$ is a finite dimensional vector
space over $\fld$, then one can give a bijection between the vectors
of $W$ and the (closed) points of the affine scheme
$\spec(\sym(W^{*}))$ \cite[Corollary 1.11]{GoWe:10}. As is customary,
we will often identify $W$ with this scheme, and in this context refer
to $\sym(W^{*})$ as the coordinate ring of $W$.

In this section, we collect information concerning equivariant
$K$-theory, tangent spaces, and tangent cones. We include proofs for
the convenience of the reader.

\subsection{The pullback to a fixed point in $T$-equivariant $K$-theory}

Let $T=(\fld^{*})^n$ be a torus, and let $\wh{T}=\Hom(T,\fld^{*})$ be the
character group of $T$. The mapping $\gl\mapsto d\gl$ from a character
to its differential at $1\in T$ embeds $\wh{T}$ in the dual $\ft^{*}$
of the Lie algebra of $T$. We will usually view $\wh{T}$ as a subset
of $\ft^{*}$ under this embedding and express the group operation
additively. If $\gl$ denotes an element of $\wh{T}$ viewed as an
element of $\ft^{*}$, then the corresponding homomorphism $T\to
\fld^{*}$ is written as $e^{\gl}$. The representation ring $R(T)$ is
defined to be the free $\Z$-module with basis $e^{\gl}$, $\gl\in
\wh{T}$, with multiplication given by $e^{\gl}e^{\gm}=e^{\gl+\gm}$.

Let $V$ be a finite dimensional representation of $T$ such that all
weights of $V$ have multiplicity one and lie in an open half-space in
the real span of the characters of $T$.  Denote the set of weights of
$T$ on $V$ by $\Phi(V)$ and the set of nonnegative integer linear
combinations of elements of $\Phi(V)$ in $\ft^{*}$ by
$\zcone\Phi(V)$. The dual representation $V^{*}$ has weights
$-\Phi(V)$, and the coordinate ring $\fld[V]=\sym(V^{*})$ of $V$ has
weights $-\cone_{\Z}\Phi(V)$. Denote
$\prod_{\gl\in\Phi(V)}(1-e^{-\gl})$ by $\gl_{-1}(V^{*})$. For
$\gP_A\subseteq \gP(V)$, let $\zring{\gP_A}$ denote the subring of
$R(T)$ generated over $\Z$ by $e^{-\gl}$, $\gl\in \gP_A$.

\begin{lemma}\label{l.bcone}
  If $\gn \in \cone_{\Z}\Phi(V)$, then $e^{-\gn}$ can be expressed as
  a monomial in $e^{-\gl}$, $\gl\in \Phi(V)$.
\end{lemma}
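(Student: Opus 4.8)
The plan is simply to unwind the definition of $\cone_{\Z}\Phi(V)$ and invoke the multiplicativity of the exponential notation in $R(T)$. By definition, $\cone_{\Z}\Phi(V)$ is the set of nonnegative integer linear combinations of the weights in $\Phi(V)$, so if $\lambda\in\cone_{\Z}\Phi(V)$ we may write $\lambda=\sum_{\alpha\in\Phi(V)}n_{\alpha}\,\alpha$ with each $n_{\alpha}\in\Z_{\geq0}$ and all but finitely many equal to zero. Since $\Phi(V)\subseteq\wh{T}$ and $\wh{T}$ is a subgroup of $\ft^{*}$, this presentation exhibits $\lambda$ as an element of $\wh{T}$, so $e^{-\lambda}$ is a well-defined basis element of $R(T)$ and the statement makes sense.

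Now recall, as set up just before the lemma, that the group law on $\wh{T}$ is written additively and that under the identification of $\wh{T}$ with a $\Z$-basis of $R(T)$ this addition corresponds to multiplication, $e^{\mu}e^{\nu}=e^{\mu+\nu}$. Applying this identity repeatedly to the presentation of $\lambda$ above gives
\[
 e^{-\lambda}=e^{-\sum_{\alpha\in\Phi(V)}n_{\alpha}\alpha}=\prod_{\alpha\in\Phi(V)}e^{-n_{\alpha}\alpha}=\prod_{\alpha\in\Phi(V)}\left(e^{-\alpha}\right)^{n_{\alpha}},
\]
which is a monomial in the elements $e^{-\alpha}$, $\alpha\in\Phi(V)$ (the empty product, equal to $1$, occurring when $\lambda=0$). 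There is no genuine obstacle here: the only point requiring any care is that the formal manipulation of exponents is valid in $R(T)$, and this is immediate from the definition of the multiplication on $R(T)$ recalled above. So the lemma is essentially a restatement of the definitions, and the proof above is the whole argument.
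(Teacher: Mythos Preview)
Your proof is correct and follows essentially the same approach as the paper: write $\lambda$ as a nonnegative integer combination of weights in $\Phi(V)$ and use the identity $e^{\mu+\nu}=e^{\mu}e^{\nu}$ in $R(T)$ to express $e^{-\lambda}$ as the corresponding product of $e^{-\alpha}$'s. The paper's proof is the same one-line computation, just with slightly less commentary.
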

\begin{proof}
  Write $\gn=c_1\gl_1+\cdots+c_t\gl_t$, where $\gl_i\in\Phi(V)$ and
  $c_i$ are nonnegative integers. Then $e^{-\gn} = e^{\sum -c_i\gl_i}
  = \prod (e^{-\gl_i})^{c_i}$.
\end{proof}

The map $i_0^{*}:K_T(V)\to R(T)$ is an isomorphism, which we denote by
$i_V^{*}$.

\begin{lemma}\label{l.cl-subvar}
  Let $\Y$ be a $T$-stable closed subscheme of $V$. Then
  $i_V^{*}[\co_\Y]_V\in \zring{\gP(V)}$.
\end{lemma}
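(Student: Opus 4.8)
The plan is to reduce to the case where $\Y$ is a linear subspace plus, at worst, a union/intersection picture, but more efficiently to argue directly from the structure of $\co_\Y$ as a finitely generated graded $\C[V]$-module. The statement to prove is that $i_V^{*}[\co_\Y]_V$ lies in the subring $\zring{\gP(V)}$ generated over $\Z$ by the $e^{-\gl}$ with $\gl\in\gP(V)$. Since $i_V^{*}\colon K_T(V)\to R(T)$ is the isomorphism coming from pullback to $0\in V$, and $K_T(V)$ is generated as an abelian group by classes of $T$-equivariant coherent sheaves, the key point is to understand $i_0^{*}$ on such classes concretely.

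First I would recall that, because $V$ is $T$-equivariantly contractible to $0$, every $T$-equivariant coherent sheaf on $V$ has a finite $T$-equivariant resolution by equivariant vector bundles, and each equivariant vector bundle on $V$ is (equivariantly) $\C[V]\otimes_\C E$ for a representation $E$ of $T$; its pullback to $0$ is just the class $[E]\in R(T)$. Hence $i_V^{*}[\co_\Y]_V = \sum_k (-1)^k [\Tor^{\C[V]}_k(\C[V]/I_\Y,\ \C)]$ in $R(T)$, where $I_\Y\subseteq \C[V]$ is the (homogeneous, $T$-stable) ideal of $\Y$; equivalently, taking a finite $T$-equivariant graded free resolution $F_\bullet\to \C[V]/I_\Y$, one has $i_V^{*}[\co_\Y]_V=\sum_k(-1)^k[F_k\otimes_{\C[V]}\C]$. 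So everything comes down to the $T$-weights occurring in a graded free resolution of $\C[V]/I_\Y$ over $\C[V]=\sym(V^{*})$.

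The core observation is then that $\C[V]$ is nonnegatively graded by $-\cone_\Z\gP(V)$ (its weights lie in $-\cone_\Z\gP(V)$, as noted just before the lemma), and a minimal graded free resolution of a nonzero finitely generated graded module has all its generators in degrees that are again weights of the module, hence in $-\cone_\Z\gP(V)$; iterating, every graded Betti number sits in degree lying in $-\cone_\Z\gP(V)$. Therefore each $[F_k\otimes\C]$ is a $\Z$-linear combination of $e^{-\gl}$ with $\gl\in\cone_\Z\gP(V)$, and by Lemma \ref{l.bcone} each such $e^{-\gl}$ is a monomial in the $e^{-\ga}$, $\ga\in\gP(V)$, i.e.\ lies in $\zring{\gP(V)}$. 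Summing with signs keeps us in this subring, which gives the claim.

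The step I expect to be the main obstacle is making the ``degrees of a minimal graded resolution are weights of the module'' argument fully rigorous in the $\ft^{*}$-graded (rather than $\Z$-graded) setting: one must check that $\C[V]/I_\Y$ is nonzero (true, since $\Y\ni 0$) and that the grading group $\wh T$ (or the relevant sub-semigroup) behaves well enough — in particular that $\cone_\Z\gP(V)$ is a pointed monoid so that Nakayama's lemma applies degreewise and minimal generators of each syzygy module occur in degrees already present in that module. If one prefers to avoid resolutions entirely, an alternative is to induct on $\dim\Y$ using a $T$-stable filtration of $\co_\Y$ whose quotients are shifted structure sheaves $\co_{\overline{\Y'}}(\gl)$ of smaller $T$-stable subvarieties, reducing to the known fact that $i_V^{*}[\co_W]_V=\gl^{-1}((V/W)^{*})=\prod_{\ga\in\gP(V/W)}(1-e^{-\ga})$ for a $T$-stable linear subspace $W\subseteq V$ together with Lemma \ref{l.bcone} to absorb the shifts $e^{-\gl}$; but the resolution argument is cleaner and is the route I would write up.
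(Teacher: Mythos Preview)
Your proposal is correct and follows essentially the same route as the paper: build a finite $T$-equivariant free resolution of $\C[V]/I_\Y$ over $\C[V]$, observe that the twists appearing in the free modules all lie in $-\cone_\Z\gP(V)$, and apply Lemma~\ref{l.bcone}. The only difference is that the paper does not invoke minimal resolutions or Nakayama: it simply picks weight-vector generators of each successive kernel, noting that since the kernel is a $T$-submodule of the previous free module $F_i$, its weights (and hence those of any chosen generators) already lie in $\gP(F_i)\subseteq\gP(R)=-\cone_\Z\gP(V)$. This sidesteps exactly the pointed-cone/Nakayama concern you flagged, so your worry there can be dropped.
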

\begin{proof} We adapt a method appearing in \cite{Ros:89}.  Denote
  $\fld[V]$ by $R$. All modules in this proof will be $T$-stable
  $R$-modules, and all maps $T$-equivariant $R$-homomorphisms.

Consider the projection
  \begin{equation}\label{e.rproj}
    R\to \fld[\Y] \to 0
  \end{equation}
  The kernel is a $T$-stable ideal $I$ of $R$ which is generated by a
  finite number of weight vectors $r_{1,1}, \ldots,r_{1,n_1}$. Let
  $\nu_{1,j}$ be the weight of $r_{1,j}$; $\fld_{\nu_{1,j}}$ the
  $T$-representation of weight $\nu_{1,j}$; and
  $F_{1}=\oplus_{j=0}^{n_1} R\otimes \fld_{\nu_{1,j}}$. Note that
  $R$ acts on the first factor of $F_1$ and $T$ on both, and that
  $\gP(F_1)\subseteq \gP(R)$.  There exists
  a map $f_{1}:F_{1}\to R$ such that
  \begin{equation}\label{e.rprojtw}
    F_1\to R\to \fld[\Y]\to 0
  \end{equation}
  is exact ($f_1$ maps $1\otimes 1$ from the $j$-th summand of $F_1$
  to $r_{1,j}$).

  The kernel of $f_1$ is finitely generated over $R$ (since $F_1$ is
  finitely generated and $R$ is Noetherian), and thus is generated by
  a finite number of weight vectors. Thus the above procedure can be
  repeated to produce a module $F_2$ and map $F_2\to F_1$, which, when
  appended to \eqref{e.rprojtw}, yields an exact sequence. Moreover,
  $\gP(F_2)\subseteq \gP(F_1)\subseteq \gP(R)$. When iterated, this
  procedure must terminate, by the Hilbert Syzygy Theorem. The
  resulting complex is a resolution of $\fld[\Y]$:
  \begin{equation}\label{e.rprojth}
    0 \to F_d\to \cdots \to F_1 \to R\to \fld[\Y] \to 0    
  \end{equation}
  where $F_i=\oplus_jR\otimes \fld_{\nu_{i,j}}$ and
  $\gP(F_i)\subseteq \gP(R)$. Thus $\nu_{i,j}\in\Phi(F_i)\subseteq
  \Phi(R)=-\cone_{\Z}\Phi(V)$.

  The resolution \eqref{e.rprojth} corresponds to a resolution of
  $\co_\Y$ over $\co_V$:
  \begin{equation*}
    0\to \cf_d \to \cdots \to \cf_1 \to \co_V \to \co_\Y \to 0
  \end{equation*}
  where $\cf_i=\oplus_j \co_V\otimes \fld_{\nu_{i,j}}$. Since
  $[\co_V]_V=1$, we have
  \begin{equation*}
    i_V^{*}[\co_\Y]_V=1+\sum_{i,j}(-1)^i e^{\nu_{i,j}} i_V^{*}[\co_V]_V = 
    1+\sum_{i,j}(-1)^ie^{\nu_{i,j}}
  \end{equation*}
  By Lemma \ref{l.bcone}, this lies in $\zring{\gP(V)}$.
\end{proof}

\subsection{Weights of tangent and normal spaces} 
The coordinate ring of $V$ is $\sym(V^{*})=\fld[x_{\gl}, \gl\in
\Phi(V)]$, a polynomial ring, where $x_{\gl}$ denotes a vector of
$V^{*}$ of weight $-\gl$.  Observe that this polynomial ring is graded
by the coordinates $x_{\gl}$ and also by the weights of the $T$
action.  The weights of any $T$-stable subspace of $V$ form a subset
of $\Phi(V)$, whose corresponding weight vectors span the subspace.
Thus there is a bijection between the $T$-stable subspaces of $V$ and
the subsets of $\Phi(V)$. The coordinate ring of a $T$-stable subspace
$Z$ is $\sym(Z^{*})=\fld[x_{\gl}, \gl\in \Phi(Z)]$.

Let $\Y\to V$ be a $T$-equivariant closed immersion, with $T$-fixed
point $x\in \Y$ mapping to $0\in V$.  Let $\fm$ be the maximal ideal
of the local ring of $Y$ at $x$.  The tangent space to $Y$ at $x$,
denoted $\tsy$, is defined to be $(\fm/\fm^2)^{*}$, a vector space
over $\fld$. It embeds naturally in the tangent space to $V$ at $0$,
which is isomorphic to $V$ \cite[(6.2), (6.3)]{GoWe:10}. 

Let
\begin{equation*}
  B=\sym(\fm/\fm^2), \qquad  C=\oplus_{i\geq 0}\fm^i/\fm^{i+1}.
\end{equation*}
Note that the degree one components of $B$ and $C$, which are denoted
by $B_1$ and $C_1$ respectively, are both equal to $\fm/\fm^2$.  We
will often identify $\tsy$ with the affine scheme $\spec(B)$. The
tangent cone to $Y$ at $x$, denoted $TC_xY$, is defined to be
$\spec(C)$.  The projection $B\twoheadrightarrow C$ induces an
inclusion $\tcy\hookrightarrow \tsy$.

Both $\tsy$ and $\tcy$ are $T$-stable, and $\tcy\hookrightarrow \tsy$
is $T$-equivariant. The coordinate ring $B$ of $\tsy$ is equal to
$\fld[x_{\gl}, \gl\in \Phi(\tsy)]$, with character
\begin{equation*}
  \Char B = \frac{1}{\prod_{\gl\in\Phi(\tsy)}(1-e^{-\gl})}
  = \frac{1}{\gl_{-1}((\tsy)^{*})}. 
\end{equation*}
This lives in $\wh{R}(T)$, the set of expressions of the form
$\sum_{\gl\in \wh{T}}c_{\gl}e^{\gl}$. Similarly, we have a formula for
the character of $C$.

\begin{proposition}\label{p.phivw}
  $\Char C=\dfrac{i_V^{*}[\co_{\tcy}]_V}{\gl_{-1}(V^{*})}=
  \dfrac{i_\tsy^{*}[\co_{\tcy}]_\tsy}{\gl_{-1}((\tsy)^{*})}$.
\end{proposition}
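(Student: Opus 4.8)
The plan is to compute $\Char C$ directly from a $T$-equivariant graded free resolution of $C$ over one of the polynomial rings $A$ or $B$, and read off the pullback class. Recall $C=\oplus_{i\geq 0}\fm^i/\fm^{i+1}$ is the coordinate ring of the tangent cone $\cc$, which sits inside $W$ (coordinate ring $B=\C[x_\ga,\ \ga\in\gP(W)]$), which in turn sits inside $V$ (coordinate ring $A=\C[x_\ga,\ \ga\in\gP(V)]$). Since all the relevant maps are $T$-equivariant and $B$ is a polynomial subring of $A$ on the variables indexed by $\gP(W)$, the class $[\co_{\cc}]$ behaves compatibly under the inclusions $\cc\hookrightarrow W\hookrightarrow V$; so it suffices to prove one of the two equalities, say the second, $\Char C=i_W^{*}[\co_{\cc}]_W/\gl^{-1}(W^{*})$, and then deduce the first by the same argument with $W$ replaced by $V$ (using that the extra variables of $A$ not in $B$ contribute both the factor $\gl^{-1}(W^{*})/\gl^{-1}(V^{*})$ to the character of $A$ over $B$ and, correspondingly, cancel in the quotient).

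The key steps, in order:

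First, as in the proof of Lemma \ref{l.cl-subvar}, take a finite $T$-equivariant graded free resolution of $C$ as a $B$-module,
\begin{equation*}
  0\to G_d\to\cdots\to G_1\to B\to C\to 0,
\end{equation*}
where each $G_i=\oplus_j B\otimes\C_{\mu_{i,j}}$ is a free $B$-module with basis elements of $T$-weights $\mu_{i,j}$; such a resolution exists by the Hilbert Syzygy Theorem since $B$ is a polynomial ring and $C$ is a finitely generated graded $B$-module (it is the associated graded ring of the local ring $\co_{\Y,x}$ with respect to $\fm$, hence finitely generated over $B=\sym(\fm/\fm^2)$). Taking characters and using $\Char(B\otimes\C_\mu)=e^{\mu}/\gl^{-1}(W^{*})$ together with the additivity of $\Char$ along exact sequences gives
\begin{equation*}
  \Char C=\frac{1}{\gl^{-1}(W^{*})}\Bigl(1+\sum_{i,j}(-1)^i e^{\mu_{i,j}}\Bigr).
\end{equation*}

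Second, observe that this same resolution, tensored up to a resolution of $\co_{\cc}$ by $T$-equivariant free $\co_W$-modules $\cg_i=\oplus_j\co_W\otimes\C_{\mu_{i,j}}$, computes the class in $K_T(W)$: since $[\co_W]_W=1$ under the identification $i_W^{*}\colon K_T(W)\xrightarrow{\sim}R(T)$, we get $i_W^{*}[\co_{\cc}]_W=1+\sum_{i,j}(-1)^i e^{\mu_{i,j}}$. Comparing with the previous display yields the second equality. Third, repeat the argument verbatim with $B$ replaced by $A$ (equivalently, view $C$ as an $A$-module via $A\twoheadrightarrow B$ and resolve over $A$): one obtains $\Char C=i_V^{*}[\co_{\cc}]_V/\gl^{-1}(V^{*})$, which is the first equality. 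Alternatively, the first equality follows from the second by pushing forward along the closed immersion $W\hookrightarrow V$, which multiplies the $K$-class by $\gl^{-1}(N^{*})$ for the normal bundle $N$ with weights $\gP(V)\setminus\gP(W)$, precisely the ratio $\gl^{-1}(V^{*})/\gl^{-1}(W^{*})$.

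The main obstacle is the bookkeeping needed to make the passage from the algebraic resolution of $C$ as a module to the $K$-theoretic class rigorous and $T$-equivariantly correct — in particular, being careful that the localization/completion involved in forming $\co_{\Y,x}$ and its associated graded does not disturb the $T$-action or the weight data, and that the characters $e^{\mu_{i,j}}$ of the syzygy modules are exactly the Laurent-polynomial data appearing in $i_W^{*}[\co_{\cc}]_W$. This is essentially the same bookkeeping already carried out in Lemma \ref{l.cl-subvar}, so I expect the proof to be short: the substantive content is just "resolve, take characters, and identify the two sides," with the two displayed formulas for $\Char C$ differing only in the ambient polynomial ring used for the resolution.
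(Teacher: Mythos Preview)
Your proposal is correct. The paper itself gives no argument here: it simply cites \cite[Proposition 2.1]{GrKr:15} for the first equality and \cite[(3.10)]{GrKr:17} for the second. Your approach---take a finite $T$-equivariant free resolution of $C$ over the polynomial ring $B$ (respectively $A$), compute characters term by term using $\Char(B\otimes\C_\mu)=e^{\mu}/\gl^{-1}(W^{*})$, and identify the alternating sum $1+\sum_{i,j}(-1)^ie^{\mu_{i,j}}$ with the pullback class exactly as in Lemma \ref{l.cl-subvar}---is the standard proof of such character formulas and is essentially the content behind the cited results. Your alternative derivation of the first equality from the second via the Koszul factor $\gl^{-1}((V/W)^{*})$ is also correct and is a clean way to see the compatibility of the two expressions. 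In short, you have supplied the substance that the paper outsources; the only ``obstacle'' you flag (that passing to the local ring does not disturb the $T$-grading or the associated graded) is routine, since $C$ depends only on the graded quotients $\fm^i/\fm^{i+1}$, which are unchanged by localization.
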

\begin{proof}
  The first equality is proved in \cite[Proposition 2.1]{GrKr:15} and
  the second in \cite[(3.10)]{GrKr:17}.
\end{proof}

\begin{proposition}\label{p.classcone}
  $[\co_{\tcy}]_V=[\co_\Y]_V$ in $K_T(V)$.
\end{proposition}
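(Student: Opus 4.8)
The plan is to pass through $T$-characters. Since $i_V^{*}\colon K_T(V)\to R(T)$ is an isomorphism, it suffices to prove $i_V^{*}[\co_{\cc}]_V=i_V^{*}[\co_\Y]_V$. For the right-hand side I would apply $\Char$ to the $T$-equivariant free resolution $0\to F_d\to\cdots\to F_1\to\C[V]\to\C[\Y]\to 0$ constructed in the proof of Lemma \ref{l.cl-subvar}, where $F_i=\bigoplus_j\C[V]\otimes\C_{\lambda_{i,j}}$; the half-space hypothesis on $\Phi(V)$ makes every $T$-weight space of $\C[V]$, hence of $\C[\Y]$ and of each $F_i$, finite dimensional, so these characters are well-defined elements of $\wh R(T)$. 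Using $\Char\C[V]=\Char\sym(V^{*})=1/\gl^{-1}(V^{*})$ and the identity $i_V^{*}[\co_\Y]_V=1+\sum_{i,j}(-1)^ie^{\lambda_{i,j}}$ established in that same proof, one gets $i_V^{*}[\co_\Y]_V=\Char\C[\Y]\cdot\gl^{-1}(V^{*})$. On the other hand Proposition \ref{p.phivw} gives $i_V^{*}[\co_{\cc}]_V=\Char C\cdot\gl^{-1}(V^{*})$, where $C=\bigoplus_{i\ge 0}\fm^i/\fm^{i+1}$ is the coordinate ring of $\cc$. Since $\co_{\Y,x}=\C[\Y]_{\mathfrak{M}}$ with $\mathfrak{M}\subseteq\C[\Y]$ the maximal ideal of the point $0$ (whose residue field is $\C$) and $\fm=\mathfrak{M}\,\co_{\Y,x}$, one has $\fm^i/\fm^{i+1}\cong\mathfrak{M}^i/\mathfrak{M}^{i+1}$ canonically and $T$-equivariantly, so $C$ is identified with the associated graded ring $\mathrm{gr}\,\C[\Y]=\bigoplus_{i\ge0}\mathfrak{M}^i/\mathfrak{M}^{i+1}$. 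Thus the proposition reduces to the equality of $T$-characters $\Char\C[\Y]=\Char\mathrm{gr}\,\C[\Y]$.

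For this I would argue that the $\mathfrak{M}$-adic filtration is compatible with the $T$-weight decomposition. The ideal $\mathfrak{M}$ is $T$-stable, so the filtration $\C[\Y]\supseteq\mathfrak{M}\supseteq\mathfrak{M}^2\supseteq\cdots$ restricts to a filtration of each weight space $\C[\Y]_\mu$, and $(\mathrm{gr}\,\C[\Y])_\mu$ is the associated graded of the restricted filtration. Each $\C[\Y]_\mu$ is finite dimensional (the half-space hypothesis bounds, for fixed $\mu$, the number of monomials in $\C[V]$ of weight $\mu$), and the restricted filtration is separated because $\bigcap_i\mathfrak{M}^i=0$ — immediate when $\Y$ is a variety, since $\C[\Y]$ is then a domain, and in general because $T$-stability together with the half-space hypothesis forces every irreducible component of $\Y$ to pass through $0$, reducing the vanishing to the domain case on each component. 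A separated filtration of a finite-dimensional space has associated graded of the same dimension, so $\dim_\C\C[\Y]_\mu=\dim_\C(\mathrm{gr}\,\C[\Y])_\mu$ for every $\mu$, i.e. $\Char\C[\Y]=\Char\mathrm{gr}\,\C[\Y]$, which completes this argument. The one genuinely nontrivial point is precisely this compatibility of the degree (equivalently $\mathfrak{M}$-adic) filtration with the $T$-grading; it amounts to saying that the Gröbner-type degeneration from $\Y$ to its tangent cone $\cc$ is $T$-equivariant and flat.

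An alternative that makes that flatness explicit, and works even without reducedness, is deformation to the normal cone. The ideal of $\C[V][t]$ generated by the elements obtained from $f\in I(\Y)$ by substituting $x_\alpha\mapsto tx_\alpha$ and dividing by the highest power of $t$ that divides the result cuts out a $T$-stable closed subscheme $\cz\subseteq V\times\A^1$ (with $T$ acting trivially on the $\A^1$-factor) which is flat over $\A^1$, with $\cz|_{t=1}=\Y$ and $\cz|_{t=0}=\cc$: the former because the removed power of $t$ is a unit at $t=1$, the latter because modulo $t$ these generators become the initial forms of the elements of $I(\Y)$, which generate the tangent-cone ideal. Constructing a $T$-equivariant free resolution $0\to\cg_d\to\cdots\to\cg_1\to\co_{V\times\A^1}\to\co_{\cz}\to 0$ as in Lemma \ref{l.cl-subvar} (with $\cg_i=\bigoplus_j\co_{V\times\A^1}\otimes\C_{\mu_{i,j}}$), flatness of $\co_{\cz}$ over $\C[t]$ ensures this restricts to an exact resolution over each fiber $V\times\{s\}$ by the modules $\bigoplus_j\co_V\otimes\C_{\mu_{i,j}}$, so $i_V^{*}[\co_{\cz|_{t=s}}]_V=1+\sum_{i,j}(-1)^ie^{\mu_{i,j}}$ is independent of $s$. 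Taking $s=0$ and $s=1$ yields $i_V^{*}[\co_{\cc}]_V=i_V^{*}[\co_\Y]_V$, hence $[\co_{\cc}]_V=[\co_\Y]_V$.
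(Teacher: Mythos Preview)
The paper's proof is simply a citation to \cite[Proposition~3.1(2)]{GrKr:17}, so there is no in-paper argument to compare against. Your proof is correct and self-contained. The two approaches you give are standard and closely related: the first reduces to $\Char\C[\Y]=\Char\mathrm{gr}_{\mathfrak M}\C[\Y]$ and checks it weight by weight, while the second realizes the same equality via the Rees degeneration and invokes flatness explicitly. One minor point: your justification of $\bigcap_i\mathfrak M^i=0$ through irreducible components is more roundabout than needed. The half-space hypothesis on $\Phi(V)$ already forces $(\mathfrak M^i)_\mu=0$ for $i$ large, since $\mathfrak M^i$ is the image in $\C[\Y]$ of the span of monomials of degree $\geq i$ in the $x_\alpha$, and with the $-\alpha$ lying in an open half-space any fixed weight $\mu$ is realized by only finitely many monomials, all of bounded degree. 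This gives separatedness of the filtration on each weight space directly, without any appeal to reducedness or component structure, and handles arbitrary $T$-stable closed subschemes $\Y$ uniformly.
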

\begin{proof}
  See \cite[Proposition 3.1(2)]{GrKr:17}.
\end{proof}

\section{Factors in equivariant $K$-theory}\label{s.removable}

We keep the notations, definitions, and assumptions of the previous
section.  Denote $\Phi(V),\Phi(\tsy)$, and $\Phi(V/\tsy)=\Phi(V)\setminus
\Phi(\tsy)$ by $\pam,\pta$, and $\pno$ respectively. Then
$\pam=\pta\sqcup\pno$.  If $P\in \zring{\pam}$, then we will say that
$1-e^{-\gt}$ is a \textbf{simple factor} of $P$ if $P=(1-e^{-\gt})Q$,
for $Q\in \zring{\pam\setminus \{\gt\}}$.

\begin{example}\label{ex.remov}
  Suppose $\gl_1, \gl_2, \gl_3, \gl_4\in \pam$ are distinct,
  and $\gl_3=\gl_1+\gl_2$. Consider
  \begin{equation*}
    P=(1-e^{-\gl_1})(1-e^{-\gl_4}) +
    (1-e^{-\gl_2})(1-e^{-\gl_4})
    - (1-e^{-\gl_1})(1-e^{-\gl_2})(1-e^{-\gl_4}),
  \end{equation*}
  an element of $\zring{\pam}$.  Then $P$ can be expressed as
  $(1-e^{-(\gl_1+\gl_2)})(1-e^{-\gl_4})$. Thus both
  $1-e^{-(\gl_1+\gl_2)}$ and $1-e^{-\gl_4}$ are simple
  factors of $P$.
\end{example}

\begin{remark}\label{r.explicit}
  In Section \ref{ss.normal}, we will need to distinguish between the
  nature of the two factors $1-e^{-(\gl_1+\gl_2)}$ and
  $1-e^{-\gl_4}$ of $P$ in Example \ref{ex.remov}. While the latter
  factor appears explicitly as a factor of each summand, and thus is
  easily identifiable as a factor of $P$, the former does not. We will
  refer to $1-e^{-\gl_4}$ as an {\it explicit factor} of the
  expression $P$.

  There are usually many ways to express an element $P\in
  \zring{\pam}$. Explicit factors depend on the particular expression
  of $P$, while (non-explicit) factors do not.
\end{remark}

We wish to study whether it is possible to determine whether a weight
$\gt$ lies in $\pno$ or $\pta$ based on whether or not $1-e^{-\gt}$ is
a simple factor of $i_V^{*}[\co_\Y]_V$. We begin with the following
observation:

\begin{proposition}\label{p.nor-remov}
  If $\gt\in \pno$, then $1-e^{-\gt}$ is a simple factor of
  $i_V^{*}[\co_\Y]_V$.
\end{proposition}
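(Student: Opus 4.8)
The plan is to reduce to the tangent cone $\cc$, which sits inside the \emph{linear} space $W=T_x\Y$, and to compare the characters computed inside $V$ and inside $W$. First I would invoke Proposition \ref{p.classcone} to rewrite $i_V^{*}[\co_\Y]_V=i_V^{*}[\co_\cc]_V$, so that it suffices to exhibit $1-e^{-\ga}$ as a simple factor of $i_V^{*}[\co_\cc]_V$.

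Next I would apply the two equalities of Proposition \ref{p.phivw}, namely $i_V^{*}[\co_\cc]_V=\gl^{-1}(V^{*})\,\Char C$ and $i_W^{*}[\co_\cc]_W=\gl^{-1}(W^{*})\,\Char C$ (valid in $\wh{R}(T)$), together with the factorization $\gl^{-1}(V^{*})=\prod_{\gb\in\pam}(1-e^{-\gb})=\big(\prod_{\gb\in\pno}(1-e^{-\gb})\big)\,\gl^{-1}(W^{*})$ coming from $\pam=\pta\sqcup\pno$. Substituting gives
$$i_V^{*}[\co_\cc]_V=\Big(\prod_{\gb\in\pno}(1-e^{-\gb})\Big)\,\gl^{-1}(W^{*})\,\Char C=\Big(\prod_{\gb\in\pno}(1-e^{-\gb})\Big)\,i_W^{*}[\co_\cc]_W,$$
an identity in $R(T)$ since both sides lie there.

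To finish I would identify the ring containing $i_W^{*}[\co_\cc]_W$: since $\cc=\spec C$ is a $T$-stable closed subscheme of $W$ (with $C=\bigoplus_{i\ge 0}\fm^i/\fm^{i+1}$ a graded quotient of $B=\C[x_\gb,\ \gb\in\pta]$), Lemma \ref{l.cl-subvar} applied with ambient space $W$ gives $i_W^{*}[\co_\cc]_W\in\zring{\pta}$. As $\ga\in\pno$, splitting off one factor yields
$$i_V^{*}[\co_\Y]_V=(1-e^{-\ga})\,Q,\qquad Q=\Big(\prod_{\gb\in\pno\setminus\{\ga\}}(1-e^{-\gb})\Big)\,i_W^{*}[\co_\cc]_W,$$
and since the first factor of $Q$ lies in $\zring{\pno\setminus\{\ga\}}$, the second in $\zring{\pta}$, and $(\pno\setminus\{\ga\})\cup\pta\subseteq\pam\setminus\{\ga\}$, we get $Q\in\zring{\pam\setminus\{\ga\}}$, as required. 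The argument is essentially a substitution, so there is no serious obstacle; the one point that needs attention is that Lemma \ref{l.cl-subvar} is stated for subvarieties whereas $\cc$ may be non-reduced — but its proof, via the Hilbert Syzygy Theorem, uses only that the coordinate ring of the subscheme is a Noetherian-module quotient of that of the ambient representation, so it applies unchanged.
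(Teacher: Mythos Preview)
Your proof is correct and follows essentially the same route as the paper's: pass to the tangent cone via Proposition~\ref{p.classcone}, use Proposition~\ref{p.phivw} to rewrite $i_V^{*}[\co_\cc]_V=\big(\prod_{\gb\in\pno}(1-e^{-\gb})\big)\,i_W^{*}[\co_\cc]_W$, and then invoke Lemma~\ref{l.cl-subvar} with ambient space $W$ to place $i_W^{*}[\co_\cc]_W$ in $\zring{\pta}\subseteq\zring{\pam\setminus\{\ga\}}$. Your remark that Lemma~\ref{l.cl-subvar} is stated for subvarieties but that its syzygy argument applies verbatim to any $T$-stable closed subscheme is a point the paper glosses over (it simply calls $\cc$ a ``closed subvariety''), so your treatment is in fact slightly more careful here.
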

\begin{proof}
  By Propositions \ref{p.classcone} and \ref{p.phivw},
 \begin{equation*}
   i_V^{*}[\co_\Y]_V=i_V^{*}[\co_{\tcy}]_V = 
   \frac{\gl_{-1}(V^{*})}{\gl_{-1}((\tsy)^{*})}i_\tsy^{*}[\co_{\tcy}]_\tsy.
 \end{equation*}
 Now,
 $\frac{\gl_{-1}(V^{*})}{\gl_{-1}((\tsy)^{*})}=\gl_{-1}((V/\tsy)^{*})=\prod_{\gl\in
   \pno}(1-e^{-\gl})$, and $1-e^{-\gt}$ occurs among the terms of this
 product exactly once. Moreover, since $\tcy$ is a closed algebraic
 subscheme of $\tsy$, Lemma \ref{l.cl-subvar} implies
 $i_\tsy^{*}[\co_{\tcy}]_\tsy\in \zring{\gP(\tsy)}\subseteq
 \zring{\pam\setminus\{\gt\}}$.
\end{proof}

The converse of this proposition is false, as illustrated by the
following example.

\begin{example}\label{ex.remov-tan}
  Suppose that $T$ acts on $V=\fld^3$, and that the standard basis
  vectors $\ve_1,\ve_2,\ve_3$ are weight vectors with corresponding
  weights $\gl_1,\gl_2,\gl_3=\gl_1+\gl_2$. Letting $x_1,x_2,x_3\in
  V^{*}$ denote the dual of the standard basis, we have, for $t\in T$,
  $t x_i = e^{-\gl_i}(t)x_i$, $i=1,2,3$. Let $\Y$ be the affine
  subscheme of $V$ defined by the ideal $I=(x_1x_2)$, and let $x$ be
  the origin. Then the ideal of the tangent space and tangent cone of
  $Y$ at $x$ are $\{0\}$ and $(x_1x_2)$ respectively. The tangent
  space of $\Y$ at $x$ is all of $V$, so $\pta=\pam$ and $\pno$ is
  empty. The tangent cone of $\Y$ at $x$ is the union of the
  $x_2x_3$-plane and the $x_1x_3$ plane, so its coordinate ring $C$
  has character
  \begin{equation*}
    \Char C =\left(\frac{1}{1-e^{-\gl_1}} +
      \frac{1}{1-e^{-\gl_2}}-1\right)\frac{1}{1-e^{-(\gl_1+\gl_2)}}
  \end{equation*}
   Additionally,
   $\gl_{-1}(V^{*})=(1-e^{-\gl_1})(1-e^{-\gl_2})(1-e^{-(\gl_1+\gl_2)})$,
   and thus by Proposition \ref{p.phivw},
 \begin{align*}
   i_V^{*}[\co_\Y]_V = i_V^{*}[\co_C]_V&=(1-e^{-\gl_1}) +
   (1-e^{-\gl_2}) -
   (1-e^{-\gl_1})(1-e^{-\gl_2})\\
   &= 1-e^{-(\gl_1+\gl_2)}
 \end{align*}
 Hence $1-e^{-\gl_3}=1-e^{-(\gl_1+\gl_2)}$ is a simple factor of
 $i_V^{*}[\co_\Y]_V$, but $\gl_3$ lies in $\pta$.
\end{example}

In this example, the fact that there exists $\gl_3$ in $\pta$ such
that $1-e^{-\gl_3}$ is a simple factor of $\pull$, thus violating the
converse of Proposition \ref{p.nor-remov}, appears to be related to
the fact that $\gl_3$ can be expressed as the sum of other weights of
$\pam$.  This suggests that the converse of Proposition
\ref{p.nor-remov} may hold if we restrict to weights $\gt$ which
cannot be expressed as such a sum.  This assertion is true, and is
proved in the following proposition.  Let us say that a weight of
$\pam$ is \textbf{integrally decomposable} if it can be expressed as a
positive integer linear combination of other elements of $\pam$, or
\textbf{integrally indecomposable} otherwise.

\begin{proposition}\label{p.ind-remov}
  Let $\gt$ be an integrally indecomposable element of $\pam$. Then
  the following are equivalent:
  \begin{enumerate}
  \item $\gt\in \pta$.
  \item $x_{\gt}\in B_1$.
  \item $x_{\gt}\in C_1$.
  \item $-\gt$ is a weight of $C$.
  \item $1-e^{-\gt}$ is not a simple factor of $i_V^{*}[\co_\Y]_V$.
  \end{enumerate}
\end{proposition}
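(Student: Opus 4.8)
The plan is to prove the cycle of equivalences by treating the easy identifications first, isolating the one substantive implication, and reducing it to the integral indecomposability hypothesis via a weight-counting argument. Conditions (ii) and (iii) are both just $\fm/\fm^2$ with its weight decomposition: $B_1 = C_1 = \fm/\fm^2$ by construction, so $x_\ga \in B_1 \iff x_\ga \in C_1$ is immediate. And (i) $\iff$ (ii) is essentially the definition of $\Char B$: since $W = \spec B$ is a $T$-stable subspace of $V$ with coordinate ring $\C[x_\ga : \ga \in \Phi(W)]$, the degree-one piece $B_1$ is spanned by $\{x_\ga : \ga \in \pta\}$, whence $\ga \in \pta \iff x_\ga \in B_1$. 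So the first three conditions are equivalent with no use of the hypothesis.

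Next I would handle (iii) $\iff$ (iv). The element $x_\ga \in C_1$ has $T$-weight $-\ga$ (since $x_\ga \in V^*$ has weight $-\ga$), so (iii) $\implies$ (iv) is trivial. The reverse is where integral indecomposability enters for the first time: if $-\ga$ is a weight of $C = \oplus_i \fm^i/\fm^{i+1}$, it could a priori occur in degree $i \geq 2$, i.e.\ as a weight of some $\fm^i/\fm^{i+1}$. But the weights of $C$ all lie in $-\cone_\Z \Phi(C_1) = -\cone_\Z \pta \subseteq -\cone_\Z \pam$; a weight occurring in degree $i \geq 2$ is a sum of $i$ elements of $\pta$, hence integrally decomposable in $\pam$. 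Since $\ga$ is assumed integrally indecomposable, $-\ga$ cannot occur in degree $\geq 2$, so it occurs in $C_1$, giving (iv) $\implies$ (iii).

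The remaining link, (i)/(iv) $\iff$ (v), is the main obstacle and the real content. Proposition~\ref{p.nor-remov} already gives one direction: if $\ga \in \pno$ (i.e.\ $\ga \notin \pta$), then $1 - e^{-\ga}$ is a simple factor of $i_V^*[\co_\Y]_V$ — equivalently, if $1-e^{-\ga}$ is not a simple factor then $\ga \in \pta$, so (v) $\implies$ (i). For the converse I would argue contrapositively: assume $1 - e^{-\ga}$ \emph{is} a simple factor of $i_V^*[\co_\Y]_V = i_V^*[\co_\cc]_V$ and show $-\ga$ is not a weight of $C$, i.e.\ (iv) fails. By Proposition~\ref{p.phivw}, $\Char C = i_V^*[\co_\cc]_V / \gl^{-1}(V^*)$. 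Write $i_V^*[\co_\cc]_V = (1 - e^{-\ga}) Q$ with $Q \in \zring{\pam \setminus \{\ga\}}$; then $\Char C = Q \big/ \prod_{\gb \in \pam \setminus \{\ga\}}(1 - e^{-\gb})$, an expression with no $e^{-\ga}$ in the numerator and no factor $(1-e^{-\ga})$ in the denominator. Expanding each $1/(1 - e^{-\gb}) = \sum_{k \geq 0} e^{-k\gb}$, every monomial appearing in $\Char C$ is of the form $e^{-\mu}$ with $\mu \in \cone_\Z(\pam \setminus \{\ga\})$. Now I must rule out that $-\ga$ is among these $-\mu$: that would force $\ga = \sum_{\gb \neq \ga} c_\gb \gb$ with $c_\gb \geq 0$, contradicting integral indecomposability of $\ga$ in $\pam$ — \emph{unless} the $c_\gb$ are all zero, which is impossible since $\ga \neq 0$ (all weights lie in a half-space). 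The one subtlety to nail down carefully is that cancellation in the formal power series cannot manufacture an $e^{-\ga}$ term that wasn't there monomial-by-monomial; this is handled by noting the half-space condition makes $\cone_\Z \pam$ a pointed monoid, so the grading by $\cone_\Z \pam$ is well-behaved and the coefficient of $e^{-\ga}$ in $\Char C$ is a finite sum of the coefficients of the individual contributing monomials, each of which is $e^{-\mu}$ with $\mu \in \cone_\Z(\pam\setminus\{\ga\})$ and $\mu \neq \ga$. This closes the loop: (v) $\iff$ (i), and combined with the earlier equivalences, all five conditions are equivalent.
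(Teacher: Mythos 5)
Your proposal is correct and follows essentially the same route as the paper: the easy identifications (i)$\Rightarrow$(ii)$\Rightarrow$(iii)$\Rightarrow$(iv), Proposition~\ref{p.nor-remov} for (v)$\Rightarrow$(i), and the contrapositive of (iv)$\Rightarrow$(v) by writing $i_V^{*}[\co_\Y]_V=(1-e^{-\ga})Q$ with $Q\in\zring{\pam\setminus\{\ga\}}$, dividing by $\gl^{-1}(V^{*})$ via Propositions~\ref{p.phivw} and \ref{p.classcone}, and using integral indecomposability to see that no $e^{-\ga}$ term can appear in the expansion of $\Char C$. Your separate argument that (iv)$\Rightarrow$(iii) is redundant (the paper gets it for free from the cycle), but harmless.
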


\begin{proof}

  (v) $\Rightarrow$ (i) by Proposition \ref{p.nor-remov}; (i)
  $\Rightarrow$ (ii) since $B=\fld[x_{\gl},\gl\in\pta]$; (ii)
  $\Rightarrow$ (iii) since $C_1=\fm/\fm^2=B_1$; and (iii)
  $\Rightarrow$ (iv) because $x_{\gt}$ has weight $-\gt$.

  It remains to prove (iv) $\Rightarrow$ (v). We prove the
  contrapositive. Thus assume that $1-e^{-\gt}$ is a simple factor of
  $i_V^{*}[\co_\Y]_V$. Then $i_V^{*}[\co_\Y]_V=(1-e^{-\gt})Q$, where
  $Q\in \zring{\pam\setminus\{\gt\}}$. By Propositions \ref{p.phivw}
  and \ref{p.classcone},
  \begin{equation*}
    \Char C =\frac{Q}{\prod_{\gl\in \pam\setminus \{\gt\}}(1-e^{-\gl})} =
    Q\prod\nolimits_{\gl\in \pam\setminus \{\gt\}} (1+e^{-\gl}+e^{-2\gl}+\cdots).
  \end{equation*}
  Expanding, one obtains an infinite sum of terms $e^{-\gn}$, $\gn\in
  \zcone(\pam)$. None of these terms is equal to $e^{-\gt}$. (This is
  because none of the factors in the above product for $\Char C$
  contain a term $e^{-\gt}$; since $\gt$ is integrally indecomposable
  in $\pam$, the term $e^{-\gt}$ cannot be obtained by expanding the
  product.)  Thus $-\gt$ is not a weight of $C$, as required.
\end{proof}

\section{Euler characteristics of subword complexes}
\label{s.euler}

In Section \ref{s.particular}, we will apply the results of the
previous section to Schubert varieties and Kazhdan-Lusztig varieties.
One main tool for this purpose is Corollary \ref{c.euler}, whose proof
relies on a theorem by Knutson-Miller on subword complexes
\cite{KnMi:05}, \cite{KnMi:04}.

\subsection{The reduced Euler characteristic}
In this subsection we give a brief review of simplicial complexes and
their Euler characteristics.

Recall that an (abstract) simplicial complex on a finite set $A$ is a
set $\Delta$ of subsets of $A$, called faces, with the property that
if $F\in \gD$ and $G\subseteq F$ then $G\in \gD$. The dimension of a
face $F$ is $\# F-1$, and the dimension of $\gD$ is the maximum
dimension of a face. A maximal face of $\gD$ is called a facet.  Note
that $\gD=\es$ and $\gD=\{\es\}$ are distinct simplicial complexes,
called the void complex and irrelevant complex respectively.  If
$\gD\neq \es$, then $\es$ must be a face of $\gD$.

The reduced Euler characteristic of $\gD$ is defined to be $\reu(\gD)
= \sum_{F\in \gD} (-1)^{\dim F}$. If $\gD\neq \es$, so that $\es\in
\gD$, then $\es$ contributes a summand of $-1$ to $\reu(\gD)$. From
this we see, for example, that $\reu(\{\es\})=-1$, but $\reu(\es)=0$.

Suppose that $\gD\neq \es$ or $\{\es\}$.  Denoting the elements of $A$
by $x_1, \ldots, x_m$, the set $A$ can be embedded in $\R^m$ by
mapping $x_i$ to the $i$th standard basis vector of $\R^m$.  For any
face $F$ of $\Delta$, let $|F|$ be the convex hull of its vertices in
$\R^m$.  The geometric realization of $\Delta$ is then defined to be
$|\Delta| = \bigcup_{F\in\Delta}|F|$, a topological subspace of
$\R^m$. If $|\gD|$ is homeomorphic to a topological space $\Y$, then
$\gD$ is called a triangulation of $\Y$. In this case, the reduced
Euler characteristic of $\gD$ is equal to the topological reduced
Euler characteristic of $\Y$.  If $\Y$ is a manifold with boundary and
its boundary $\bd \Y$ is nonempty, then there exists a subcomplex of
$\gD$ which is a triangulation of $\bd \Y$ \cite[Proposition
5.4.4]{Mau:80}. This subcomplex is called the boundary of $\gD$ and
denoted by $\bd \gD$.

For $m\geq 0$, let $B^m=\{x\in \R^m: \|x\|\leq 1\}$ and $S^m=\{x\in
\R^{m+1}: \|x\| = 1\}$, the $m$-ball and $m$-sphere respectively. Both
can be triangulated. In the sequel, when we refer to an $m$-ball or
$m$-sphere or their notations, $m\geq 0$, we will mean a triangulation
of the object. When we refer to the sphere $S^{-1}$, we will mean the
irrelevant complex $\{\es\}$. With these conventions, for $m\geq 0$,
$\reu(B^m)=0$, $\reu(S^{m-1})=(-1)^{m-1}$, $\bd B^m=S^{m-1}$, and $\bd
S^{m-1}=\es$.  (Observe that $\bd B^0$ is the irrelevant complex, but
$\bd S^{m-1}$ is the void complex.) For $\gD=B^m$ or $S^m$, define
$\oreu(\gD)=\reu(\gD)-\reu(\bd \gD)$.

\begin{lemma}\label{l.interior}
  $\oreu(B^m)=\oreu(S^m)=(-1)^m$, for $m\geq 0$.
\end{lemma}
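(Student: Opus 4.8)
The plan is to unwind the definition $\oreu(\gD)=\reu(\gD)-\reu(\bd\gD)$ directly, using the values of $\reu$ and $\bd$ on balls and spheres recorded in the conventions immediately preceding the statement. There is no real content beyond this bookkeeping; the only subtlety is the boundary case $m=0$, where one must keep straight that $\bd B^0$ is the irrelevant complex while $\bd S^0$ is the void complex.

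For $B^m$ with $m\geq 0$, I would use $\reu(B^m)=0$ and $\bd B^m=S^{m-1}$, so that
\begin{equation*}
  \oreu(B^m)=\reu(B^m)-\reu(\bd B^m)=0-\reu(S^{m-1})=-(-1)^{m-1}=(-1)^m,
\end{equation*}
where $\reu(S^{m-1})=(-1)^{m-1}$ is exactly the value recorded above (valid for all $m\geq 0$, with $S^{-1}$ interpreted as the irrelevant complex $\{\es\}$, for which $\reu(\{\es\})=-1$; thus $\oreu(B^0)=0-(-1)=1=(-1)^0$).

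For $S^m$ with $m\geq 0$, I would use $\bd S^m=\es$ (the void complex, so $\reu(\bd S^m)=\reu(\es)=0$) together with $\reu(S^m)=(-1)^m$, giving
\begin{equation*}
  \oreu(S^m)=\reu(S^m)-\reu(\bd S^m)=(-1)^m-0=(-1)^m.
\end{equation*}
The main (and only) obstacle is making sure the two boundary conventions are applied correctly in the case $m=0$; everything else is an immediate substitution.
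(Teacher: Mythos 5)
Your proof is correct and is essentially the same calculation as the paper's: substitute $\reu(B^m)=0$, $\reu(S^{m-1})=(-1)^{m-1}$, $\bd B^m=S^{m-1}$, and $\bd S^m=\es$ into the definition of $\oreu$. The extra care you take with the $m=0$ boundary conventions is consistent with the paper's stated conventions and does not change the argument.
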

\begin{proof}
  The proof is a calculation: $\oreu(B^m)=0-(-1)^{m-1}=(-1)^m$, and
  $\oreu(S^m)=(-1)^m-0=(-1)^m$.
\end{proof}

\subsection{0-Hecke algebras}\label{ss.hecke}

Let $G$ be a semisimple algebraic group, $B$ a Borel subgroup, $B^-$
the opposite Borel subgroup, and $T=B\cap B^-$ a maximal torus. Let
$W=N_G(T)/T$, the Weyl group of $G$. Let $S$ be the set of simple
reflections of $W$ relative to $B$.  The 0-Hecke algebra $\ch$
associated to $(W,S)$ over a commutative ring $R$ is the associative
$R$-algebra generated by $H_u$, $u\in W$, and subject to the following
relations: $H_1$ is the identity element, and if $u\in W$ and $s\in
S$, then $H_uH_s=H_{us}$ if $\ell(us)>\ell(u)$ and $H_uH_s=H_u$ if
$\ell(us)<\ell(u)$. If $\vq=(q_1,\ldots,q_l)$ is any sequence of
elements of $S$, define the Demazure product $\gd(\vq)\in W$ by the
equation $H_{q_1}\cdots H_{q_l}=H_{\gd(\vq)}$.  Define $\ell(\vq)=l$
and $e(\vq)=\ell(\vq)-\ell(\gd(\vq))$.

\subsection{Subword Complexes}

Let $\vs=(s_1,\ldots,s_l)$ be a sequence of elements of $S$ and $w\in
W$. The subword complex $\Delta(\vs,w)$ is defined to be the set of
subsequences $\vr = (s_{i_1},\ldots,s_{i_t})$, $1\leq i_1<\cdots
<i_t\leq l$, whose complementary subsequence $\vs\setminus \vr$
contains a reduced expression for $w$. One checks that $\Delta(\vs,w)$
is a simplicial complex. Subword complexes were introduced in
\cite{KnMi:04}, \cite{KnMi:05}. We will require that $\vs$ contains a
reduced expression for $w$.

\begin{remark}
  The requirement that $\vs$ contains a reduced expression for $w$
  implies that the empty sequence $\es$ lies in $\gD(\vs,w)$. In
  particular, $\gD(\vs,w)$ is not the void complex. It is possible,
  however, for $\gD(\vs,w)$ to be the irrelevant complex. For example,
  this occurs when $\vs=(s_1)$, $w=s_1$.
\end{remark}

The following theorem is \cite[Theorem 3.7]{KnMi:04}:

\begin{theorem}\label{t.km}
  The subword complex $\Delta(\vs,w)$ is either a ball or sphere. A
  face $\vr$ is in the boundary of $\Delta(\vs,w)$ if and only if
  $\delta(\vs\setminus \vr)\neq w$.
\end{theorem}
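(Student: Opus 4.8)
The plan is to reduce Theorem~\ref{t.km} to combinatorics of words in $W$, then induct on $l=\ell(\vs)$ by decomposing $\Delta(\vs,w)$ at the vertex $p$ carried by its last letter $s_l$, and finally bookkeep the boundary. First I would set up the combinatorial reduction: a word $\vq$ contains a reduced subword for $u\in W$ if and only if its Demazure product satisfies $\delta(\vq)\ge u$ in the Bruhat order, and $\delta$ can only weakly decrease when a letter is deleted. Hence $\Delta(\vs,w)=\{\vr:\delta(\vs\setminus\vr)\ge w\}$ is pure of dimension $l-\ell(w)-1$, a face $\vr$ being a facet exactly when $\vs\setminus\vr$ is a reduced word for $w$. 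I would also use the subword property and the lifting property of the Bruhat order freely. For $l=0$ one has $w=1$, $\Delta(\vs,w)=\{\es\}=S^{-1}$, $\partial\Delta(\vs,w)=\es$, and $\delta(\vs)=1=w$, so the claim holds.

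For the inductive step put $\vs'=(s_1,\dots,s_{l-1})$, so $\delta(\vs)=\delta(\vs')\ast s_l$ where $\ast$ denotes the Demazure product, and distinguish three cases. (a) If $\vs'$ contains no reduced word for $w$, then $p$ is not a vertex of $\Delta:=\Delta(\vs,w)$; since $\delta(\vs')\ast s_l=\delta(\vs)\ge w$ while $\delta(\vs')\not\ge w$, one has $\ell(\delta(\vs')s_l)>\ell(\delta(\vs'))$, and the lifting property then forces $\delta(\vs')\ge ws_l$ and $ws_l<w$; one checks $\Delta(\vs,w)=\Delta(\vs',ws_l)$ as simplicial complexes, so the inductive hypothesis applies verbatim. (b) If $\vs'$ contains a reduced word for $w$ and $ws_l>w$, a short computation gives $\mathrm{lk}_\Delta(p)=\mathrm{del}_\Delta(p)=\Delta(\vs',w)$, so $\Delta=p\ast\Delta(\vs',w)$ is a cone over a ball or sphere, hence a ball. (c) If $\vs'$ contains a reduced word for $w$ and $ws_l<w$, then $\mathrm{lk}_\Delta(p)=\Delta(\vs',w)\subseteq\Delta(\vs',ws_l)=\mathrm{del}_\Delta(p)$ (the inclusion by the subword property); by induction both are balls or spheres, $\mathrm{del}_\Delta(p)$ is forced to be a ball because $\es\in\partial\,\mathrm{del}_\Delta(p)$ (as $\delta(\vs')\ge w\ne ws_l$), and the inductive boundary formula gives $\mathrm{lk}_\Delta(p)\subseteq\partial\,\mathrm{del}_\Delta(p)$. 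Since $\Delta=\mathrm{del}_\Delta(p)\cup\bigl(p\ast\mathrm{lk}_\Delta(p)\bigr)$, glued along $\mathrm{lk}_\Delta(p)$, the PL gluing lemmas show $\Delta$ is a ball when $\mathrm{lk}_\Delta(p)$ is a proper subcomplex of $\partial\,\mathrm{del}_\Delta(p)$ (i.e.\ is itself a ball) and a sphere when $\mathrm{lk}_\Delta(p)=\partial\,\mathrm{del}_\Delta(p)$ (i.e.\ is a sphere).

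It remains, in each case, to identify $\partial\Delta$ with $\{\vr\in\Delta:\delta(\vs\setminus\vr)\ne w\}$. The ingredients are: the inductive boundary formulas for $\Delta(\vs',w)$ and $\Delta(\vs',ws_l)$; the standard description of the boundary of a cone, and of two balls glued along a boundary ball, in terms of the boundaries of the pieces; the splitting $\vs\setminus\vr=\vs'\setminus(\vr\setminus\{p\})$ when $p\in\vr$ and $\vs\setminus\vr=(\vs'\setminus\vr)\,s_l$ when $p\notin\vr$; and the elementary identities $w\ast s_l=w$ and $(ws_l)\ast s_l=w$, valid when $ws_l<w$ (with the evident analogues when $ws_l>w$), which rewrite $\delta(\vs\setminus\vr)$ in terms of $\delta(\vs'\setminus\vr)$. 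With these, matching the two sides of the boundary statement is a finite verification in each subcase.

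The step I expect to be the main obstacle is the topology inside cases (b) and (c): marshalling the correct PL gluing lemmas (a cone over a ball or sphere is a ball; two balls glued along a common boundary ball form a ball; a ball capped off by the cone over its boundary sphere is a sphere) and, more delicately, tracking precisely which faces survive in $\partial\Delta$ after each gluing, so that the outcome is exactly $\{\vr:\delta(\vs\setminus\vr)\ne w\}$. An equivalent repackaging I would keep in reserve is to first show that $\Delta(\vs,w)$ is vertex-decomposable — the link and deletion computations above exhibit links of subword complexes as subword complexes — and that a codimension-one face $\vr$ lies in exactly two facets when $\delta(\vs\setminus\vr)=w$ and in exactly one facet otherwise; a vertex-decomposable pseudomanifold with boundary is automatically a ball or a sphere, with boundary the subcomplex generated by the codimension-one faces lying in a unique facet, namely $\{\vr:\delta(\vs\setminus\vr)\ne w\}$.
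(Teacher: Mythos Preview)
The paper does not prove this theorem: it is simply stated with the attribution ``The following theorem is \cite[Theorem 3.7]{KnMi:04}'' and no argument is given. So there is no proof in the paper to compare your proposal against.

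Your sketch is essentially a reconstruction of the Knutson--Miller argument, and it is correct in outline. In fact your ``reserve'' approach is closer to what Knutson and Miller actually do in the cited reference: they first prove that subword complexes are vertex-decomposable (via precisely the link and deletion identifications $\mathrm{lk}_\Delta(p)=\Delta(\vs',w)$ and $\mathrm{del}_\Delta(p)=\Delta(\vs',ws_l)$ or $\Delta(\vs',w)$ that you wrote down), hence shellable; they then show that each codimension-one face $\vr$ lies in exactly one or two facets according to whether $\delta(\vs\setminus\vr)\ne w$ or $\delta(\vs\setminus\vr)=w$; and finally they invoke the standard fact that a shellable complex in which every codimension-one face lies in at most two facets is a PL ball or sphere, with boundary generated by the codimension-one faces lying in a unique facet. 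Your primary approach via direct PL gluing is also valid, but---as you yourself flag---the gluing lemmas and the verification that $\mathrm{lk}_\Delta(p)$ sits inside $\partial\,\mathrm{del}_\Delta(p)$ as a full-dimensional sub-ball (or as the entire boundary sphere) require more care than the shellability route. If you want a self-contained proof, the vertex-decomposable argument is the cleaner path, and it is the one taken in the source the paper cites.
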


\begin{corollary}\label{c.kmtwo} $\displaystyle
  \sum_{\vq\subseteq \vs \atop\gd(\vq)= w}(-1)^{e(\vq)}=1$.
\end{corollary}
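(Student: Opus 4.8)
The plan is to recognize the sum in Corollary~\ref{c.kmtwo} as the ``interior'' reduced Euler characteristic $\oreu(\Delta(\vs,w))$ of the subword complex, up to an overall sign, and then to invoke Theorem~\ref{t.km} (the ball-or-sphere dichotomy, together with its boundary description) and Lemma~\ref{l.interior}. I use throughout the standing hypothesis of Section~\ref{s.euler} that $\vs$ contains a reduced expression for $w$ (without it the sum on the left is empty and the identity fails), and the standard fact that for a word $\vq$ in $S$ and $u\in W$ one has $\gd(\vq)\geq u$ if and only if $\vq$ has a subsequence which is a reduced expression for $u$.

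First I would pin down the dimension $d:=\dim\Delta(\vs,w)$: the facets of $\Delta(\vs,w)$ are exactly the complements $\vs\setminus\vr$ where $\vr$ runs over the reduced expressions for $w$ contained in $\vs$, so every facet has $l-\ell(w)$ elements and $d=l-\ell(w)-1$. (When $\vs$ is itself reduced for $w$ this gives $d=-1$ and $\Delta(\vs,w)=\{\es\}=S^{-1}$.) The combinatorial heart is then the observation that complementation $\vr\mapsto\vq:=\vs\setminus\vr$ restricts to a bijection from $\Delta(\vs,w)\setminus\bd\Delta(\vs,w)$ (the interior faces) onto $\{\vq\subseteq\vs : \gd(\vq)=w\}$: indeed $\vr$ is a face precisely when $\gd(\vs\setminus\vr)\geq w$, and by Theorem~\ref{t.km} such a face lies in $\bd\Delta(\vs,w)$ precisely when $\gd(\vs\setminus\vr)\neq w$, so $\vr$ is an interior face exactly when $\gd(\vq)=w$, and complementation is an involution on subsequences of $\vs$. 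Under this bijection $\dim\vr=\#\vr-1=l-\#\vq-1$ while $e(\vq)=\#\vq-\ell(\gd(\vq))=\#\vq-\ell(w)$, whence $\dim\vr+e(\vq)=l-\ell(w)-1=d$ and therefore $(-1)^{e(\vq)}=(-1)^{d}(-1)^{\dim\vr}$.

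Summing over interior faces, then using the definition of $\oreu$, Theorem~\ref{t.km}, and Lemma~\ref{l.interior}:
\begin{align*}
  \sum_{\vq\subseteq\vs,\ \gd(\vq)=w}(-1)^{e(\vq)}
  &=(-1)^{d}\sum_{\vr\in\Delta(\vs,w)\setminus\bd\Delta(\vs,w)}(-1)^{\dim\vr}\\
  &=(-1)^{d}\bigl(\reu(\Delta(\vs,w))-\reu(\bd\Delta(\vs,w))\bigr)\\
  &=(-1)^{d}\,\oreu(\Delta(\vs,w))=(-1)^{d}(-1)^{d}=1.
\end{align*}
I do not expect a deep obstacle: the substantive input is Theorem~\ref{t.km}, which I am permitted to assume. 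The only real care is in the sign-and-dimension bookkeeping above and in the degenerate low-dimensional cases ($d=-1$ and $d=0$), where one must check against the Section~\ref{s.euler} conventions (in particular $\bd B^{0}=S^{-1}$, $S^{-1}=\{\es\}$, $\reu(\{\es\})=-1$) that the identity $\oreu(\Delta(\vs,w))=(-1)^{d}$ still holds; once those conventions are recorded the argument closes uniformly.
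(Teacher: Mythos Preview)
Your proof is correct and follows essentially the same route as the paper's: both arguments use complementation $\vr\leftrightarrow\vq=\vs\setminus\vr$ together with Theorem~\ref{t.km} to identify the sum (up to the sign $(-1)^{d}$, $d=l-\ell(w)-1$) with $\oreu(\Delta(\vs,w))$, and then invoke Lemma~\ref{l.interior}. The only cosmetic difference is that the paper first computes $\sum_{\gd(\vs\setminus\vr)=w}(-1)^{\ell(\vr)-1}=(-1)^{d}$ and then re-indexes, whereas you build the sign bookkeeping $(-1)^{e(\vq)}=(-1)^{d}(-1)^{\dim\vr}$ directly into the bijection.
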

\begin{proof}
  If $\vr\subseteq \vs$, then $\vr\in \gD(\vs,w)$ if and only if
  $\vs\setminus \vr$ contains a reduced expression for $w$ if and only
  if $\gd(\vs\setminus \vr)\geq w$ \cite[Lemma 3.4 (1)]{KnMi:04}.  The
  dimension of face $\vr$ is equal to $\ell(\vr)-1$. Thus
  $\sum_{\{\vr\subseteq \vs, \gd(\vs\setminus\vr)\geq
    w\}}(-1)^{\ell(\vr)-1}=\reu(\gD(\vs,w))$. By the second statement
  of Theorem \ref{t.km}, $\sum_{\{\vr\subseteq \vs,
    \gd(\vs\setminus\vr)> w\}}(-1)^{\ell(\vr)-1}=\reu( \bd
  \gD(\vs,w))$. Hence
  \begin{equation*}
    \sum_{\vr\subseteq \vs\atop \gd(\vs\setminus\vr)=
      w}(-1)^{\ell(\vr)-1}=\oreu(\gD(\vs,w)).
  \end{equation*}
  But since $\gD(\vs,w)$ is either a ball or sphere,
  $\oreu(\gD(\vs,w))=(-1)^{\dim \gD(\vs,w)}$, by Lemma
  \ref{l.interior}. To compute $\dim \gD(\vs,w)$, observe that if
  $\vr\in \gD(\vs,w)$ has maximal length, then $\vs\setminus \vr$ is a
  reduced word for $w$; thus $\ell(\vs)-\ell(\vr)=\ell(w)$, so
  $\ell(\vr)=\ell(\vs)-\ell(w)$. Thus $\dim
  \gD(\vs,w)=\ell(\vs)-\ell(w)-1$. We conclude
  \begin{equation*}
    \sum_{\vr\subseteq \vs\atop \gd(\vs\setminus\vr)=
      w}(-1)^{l(\vr)-1}=(-1)^{\ell(\vs)-\ell(w)-1}.
  \end{equation*}
  Multiplying both sides of this equation by
  $(-1)^{\ell(\vs)-\ell(w)-1}$, we obtain
  \begin{equation*}
  \sum_{\vr\subseteq \vs\atop
    \gd(\vs\setminus\vr)= w}(-1)^{e(\vs\setminus \vr)}=1,
\end{equation*}
since, if $\gd(\vs\setminus \vr)=w$, then we have
$(-1)^{\ell(\vs)-\ell(w)-1}(-1)^{\ell(\vr)-1}
=(-1)^{\ell(\vs)-\ell(\vr)-\ell(w)}=(-1)^{\ell(\vs\setminus
  \vr)-\ell(\gd(\vs\setminus \vr))}=(-1)^{e(\vs\setminus \vr)}$.  Now
the desired equation is obtained by re-indexing this summation. Rather
than summing over subsequences $\vr$ of $\vs$, one sums over their
complementary subsequences $\vq$.
\end{proof}

\begin{definition}\label{d.tws}
  Let $w\in W$ and let $\vs=(s_1,\ldots,s_p)$ be a sequence of simple
  reflections in $S$. Define $\ct_{w,\vs}$ to be the set of sequences
  $\vt=(i_1,\ldots,i_m)$, $1\leq i_1<\cdots<i_m\leq p$, such that
  $H_{s_{i_1}}\cdots H_{s_{i_m}}=H_w$. Then $\ell(\vt)=m$ and
  $e(\vt)=\ell(\vt)-\ell(w)$.
\end{definition}

\begin{corollary}\label{c.euler}
  $\sum_{\vt\in \ct_{w,\vs}}(-1)^{e(\vt)}=1$, if $\ct_{w,\vs}\neq \emptyset$. 
\end{corollary}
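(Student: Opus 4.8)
The plan is to recognize Corollary \ref{c.euler} as a mere reindexing of Corollary \ref{c.kmtwo}. A subsequence $\vq\subseteq\vs$ of the length-$p$ sequence $\vs=(s_1,\ldots,s_p)$ is, by definition, specified by the strictly increasing list of positions it occupies; writing $\vq=(s_{i_1},\ldots,s_{i_m})$ with $1\leq i_1<\cdots<i_m\leq p$, the assignment $\vq\mapsto\vt=(i_1,\ldots,i_m)$ is a bijection from the set of subsequences of $\vs$ onto the set of strictly increasing index sequences in $\{1,\ldots,p\}$. I would first record this bijection explicitly.

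Next I would check that it carries the summation set of Corollary \ref{c.kmtwo} onto that of Corollary \ref{c.euler}. By the definition of the Demazure product in Subsection \ref{ss.hecke}, the condition $\gd(\vq)=w$ means $H_{q_1}\cdots H_{q_m}=H_{\gd(\vq)}=H_w$, i.e.\ $H_{s_{i_1}}\cdots H_{s_{i_m}}=H_w$, which is precisely the condition defining $\vt\in\ct_{w,\vs}$ in Definition \ref{d.tws}. So $\vq\mapsto\vt$ restricts to a bijection between $\{\vq\subseteq\vs:\gd(\vq)=w\}$ and $\ct_{w,\vs}$. It remains to match the exponents: $e(\vq)=\ell(\vq)-\ell(\gd(\vq))=m-\ell(w)$, while Definition \ref{d.tws} gives $e(\vt)=\ell(\vt)-\ell(w)=m-\ell(w)$, so $e(\vq)=e(\vt)$ for corresponding terms. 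Hence
\[
\sum_{\vt\in\ct_{w,\vs}}(-1)^{e(\vt)}
=\sum_{\vq\subseteq\vs,\ \gd(\vq)=w}(-1)^{e(\vq)}=1
\]
by Corollary \ref{c.kmtwo}.

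Since each step is a direct translation of definitions, there is no genuine obstacle here; the one point to state carefully is the standing hypothesis (inherited from Corollary \ref{c.kmtwo} and Theorem \ref{t.km}) that $\vs$ contain a reduced expression for $w$. Without it, no $\vq$ satisfies $\gd(\vq)=w$, the set $\ct_{w,\vs}$ is empty, and both sides of the identity vanish; with it --- which holds in all our applications, where $\vs$ is a reduced word for some $x\geq w$ --- the argument above applies verbatim.
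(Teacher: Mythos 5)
Your proof is correct and follows exactly the paper's argument: the sum in Corollary \ref{c.euler} is identified, via the evident bijection between subsequences of $\vs$ and increasing index sequences (with matching Demazure products and exponents $e$), with the sum in Corollary \ref{c.kmtwo}, which equals $1$. The paper's proof is just this reindexing, stated more tersely.
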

\begin{proof} 
  We have
  \begin{equation*}
    \sum_{\vt\in \ct_{w,\vs}}(-1)^{e(\vt)} = 
    \sum_{\vq\subseteq \vs \atop \delta(\vq)=w}(-1)^{e(\vq)}  = 1, 
  \end{equation*}
  where the first equality is obtained by re-indexing and the second
  equality is Corollary \ref{c.kmtwo}.  Note that the hypothesis
  $T_{w,\vs}\neq \emptyset$ assures us that $\vs$ contains a reduced
  expression for $w$, a requirement for Corollary \ref{c.kmtwo}.
\end{proof}

We remark that the expression $\sum_{\vt\in \ct_{w,\vs}}(-1)^{e(\vt)}$
appearing in Corollary \ref{c.euler} has elements in common with the
expression for $\pullb$ given by \eqref{e.gw}. The similarity between
these expressions is critical to our proof of Theorem
\ref{t.sk-standard}.

\section{Applications to Kazhdan-Lusztig and Schubert
  varieties}\label{s.particular}

In Section \ref{s.removable} we saw that the pullback
$i_V^{*}[\co_\Y]_V$ can be used to determine whether an integrally
indecomposable weight $\alpha\in \pam$ lies in $\pno$ or
$\pta$. Specifically, $\alpha$ lies in $\pno$ if and only if
$1-e^{-\gt}$ is a simple factor of
$i_V^{*}[\co_\Y]_V$. Computationally, however, an algorithm which
utilizes this idea would seem to present difficulties, since it is
often possible to express $\pull$ in many different ways.  Determining
whether $1-e^{-\gt}$ is a factor of $\pull$ is nontrivial in general.

In this section we show that when $\Y$ is the Kazhdan-Lusztig variety
$\kxw$ in an appropriate space $V\subseteq G/B$, then a particular
expression $P_{w,\vs}$ for $\pull$ due to Graham and Willems has the
property that, if we assume that $\gt$ is integrally indecomposable in
$\pam=\gP(V)$, then whenever $1-e^{-\gt}$ is a simple factor of
$\pull$, it is a factor of $P_{w,\vs}$ in a trivial fashion (see
Theorem \ref{t.sk-standard}).  Thus the expression $P_{w,\vs}$ allows
us to detect simple factors $1-e^{-\gt}$ of $\pull$, $\gt$ integrally
indecomposable, in a computationally simple manner.

We begin with two subsections reviewing properties of Kazhdan-Lusztig
and Schubert varieties in $G/B$.

\subsection{Unipotent subgroups and affine spaces in $G/B$}
\label{ss.unip}

Let $G$ be a semisimple algebraic group defined over a algebraically
closed field $k$ of characteristic 0, $B$ a Borel subgroup, $B^-$ the
opposite Borel subgroup, and $T=B\cap B^-$, a maximal torus. Let
$W=N_G(T)/T$, the Weyl group of $G$, and let $S$ be the set of simple
reflections of $W$ relative to $B$.

We consider several unipotent subgroups of $G$, referring the reader
to \cite[14.12]{Bor:91} for a more detailed discussion of their
properties. Let $U$ and $U^-$ be the unipotent radicals of $B$ and
$B^-$ respectively.  Their weights, $\gP(U)$ and $\gP(U^-)$, are by
definition the positive and negative roots $\gP^+$ and $\gP^-$
respectively. The unipotent subgroup $xU^-x^{-1}$, which we denote by
$U^-(x)$, has weights $x\gP^-$.  The unipotent subgroup $U^-(x)\cap U$
has weights $\gP(U^-(x)\cap U)=x\gP^-\cap \gP^+$, which equals
$\{\ga\in \gP^+\mid x^{-1}(\ga)<0\}$, the inversion set of
$x^{-1}$. Similarly, $U^-(x)\cap U^-$ has weights $\gP(U^-(x)\cap
U^-)=x\gP^-\cap \gP^-=\{\ga\in \gP^-\mid x^{-1}(\ga)<0\}=-\{\ga\in
\gP^+\mid x^{-1}(\ga)>0\}=-(\gP^+\setminus I(x^{-1}))$.  The unipotent
subgroups discussed in this paragraph are isomorphic to their Lie
algebras \cite[Remark 14.4]{Bor:91}.  In particular, they are
isomorphic to affine spaces, with which we often associate them.

The variety $G/B$ is called the full flag variety. The $T$-fixed
points of $G/B$ are of the form $uB$, $u\in W$.  By Bruhat
decomposition, under the mapping $\gz:U^-(x) \to G/B$, $y\mapsto
y\cdot xB$, the unipotent subgroup $U^-(x)$ embeds as a $T$-stable
affine space in $G/B$ containing $xB$.  We denote this affine space by
$C_{x}$.  The unipotent subgroup $U^-(x)\cap U$ embeds as an affine
subspace, which we denote by $V$.  We note that the weight spaces of
$C_x$ are one dimensional and that the weights lie in an open
half-space; thus the same is true of $V$.

\subsection{Schubert and Kazhdan-Lusztig varieties in $G/B$}

The Schubert variety $\xw\subseteq G/B$ is defined to be
$\overline{B^-wB}$, the Zariski closure of the $B^-$ orbit through
$wB$. The Kazhdan-Lusztig variety $\kxw$ is defined to be $V\cap \xw$.
As the following lemma shows, locally, the two varieties differ only
by an affine space with well-prescribed weights.

\begin{lemma}\label{l.sch-kl} Let $w\leq x \in W$. Then
  \begin{enumerate}
  \item $C_{x}\cong \vamkmb\times V$.
  \item $\xw \cap C_{x}\cong \vamkmb \times \kxw$.
  \item $T_x\xw \cong \vamkmb \times T_x\kxw$.
  \item $\gP(T_x\xw)=-(\gP^+\setminus I(x^{-1}))\sqcup \gP(T_x\kxw)$.
  \end{enumerate}
  The isomorphisms (i) - (iii) are $T$-equivariant isomorphisms of
  varieties.
\end{lemma}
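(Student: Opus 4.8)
The plan is to prove the four parts in the order stated, each feeding into the next; throughout I would abbreviate $U' := U^-(x)\cap U^-$, a subgroup of $B^-$ whose weights are $x\gP^-\cap\gP^-$. For (i), the weights of $U^-(x)=xU^-x^{-1}$ are $x\gP^-$, which split as $x\gP^- = (x\gP^-\cap\gP^-)\sqcup(x\gP^-\cap\gP^+)=\gP(U')\sqcup I(x^{-1})$, the weights of $U'$ and of $U^-(x)\cap U$ respectively. I would then invoke the standard ordered product decomposition of a unipotent group: choose a total order on $x\gP^-$ placing every element of $\gP(U')$ before every element of $I(x^{-1})$, expand $U^-(x)$ as the ordered product of its one-dimensional root subgroups, and regroup, obtaining a $T$-equivariant isomorphism of varieties $U'\times (U^-(x)\cap U)\xrightarrow{\sim} U^-(x)$. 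Composing with $\gz\colon U^-(x)\xrightarrow{\sim} C_x$ and using $V=\gz(U^-(x)\cap U)$ turns this into the isomorphism $U'\times V\xrightarrow{\sim} C_x$, $(u,p)\mapsto u\cdot p$, which is (i).

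For (ii), note that $\xw=\overline{B^-wB}$ is stable under left multiplication by $B^-\supseteq U^-\supseteq U'$, and that $U'\cdot C_x = U'\,U^-(x)\cdot xB = U^-(x)\cdot xB = C_x$; hence $U'$ acts on the closed subscheme $\xw\cap C_x$ of $C_x$, and under the isomorphism of (i) this action is translation on the first factor of $U'\times V$. The crucial input is that a closed subscheme of $U'\times V$ stable under $U'$-translation on the first factor is necessarily of the form $U'\times Z$ for a unique closed subscheme $Z\subseteq V$ — this holds because $U'\times V\to V$ is a trivial $U'$-torsor, so closed subschemes of $V$ correspond to $U'$-stable closed subschemes of $U'\times V$ (over $\C$ one can instead differentiate the translation-invariance of the defining ideal). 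Restricting to the slice $\{1\}\times V\cong V$ identifies $Z$ with $(\xw\cap C_x)\cap V = \xw\cap V = \kxw$, using $V\subseteq C_x$. Thus $\xw\cap C_x\cong U'\times\kxw$.

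For (iii) and (iv): since $C_x = x\cdot(U^-B/B)$ is open in $G/B$, we have $T_x\xw = T_x(\xw\cap C_x)$. Passing to Zariski tangent spaces at $xB$ — which corresponds to $(1,xB)\in U'\times\kxw$ — in the isomorphism of (ii), and using that tangent spaces commute with products and that the tangent space at $1$ of the unipotent group $U'$ is canonically $U'$ itself (under the identification of a unipotent group with its Lie algebra recalled above), I obtain the $T$-equivariant isomorphism $T_x\xw\cong U'\times T_x\kxw$ of (iii). Taking $T$-weights gives $\gP(T_x\xw) = \gP(U')\sqcup\gP(T_x\kxw)$; and the root-system identity $\gP^+\setminus I(x^{-1}) = \gP^+\setminus(x\gP^-\cap\gP^+) = \gP^+\cap x\gP^+$ gives $-(\gP^+\setminus I(x^{-1})) = \gP^-\cap x\gP^- = \gP(U')$, which is (iv).

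The only step with genuine content is the structural claim in (ii): that a closed subscheme of $U'\times V$ invariant under $U'$-translation is a product $U'\times Z$. Over $\C$ this is standard, but it is where the product structure truly originates, and it is what forces one to work with the actual scheme structures rather than merely the underlying point sets (as a bonus it yields that $\kxw$ is reduced, since $\xw$ is). Everything else is bookkeeping with the root system and with functoriality of tangent spaces.
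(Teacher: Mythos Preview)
Your proof is correct and follows essentially the same route as the paper: parts (i) and (ii) in the paper are justified by citing \cite[(4.6)]{GrKr:17} and \cite[Lemma 4.6]{GrKr:17}, and your argument for these---the root-subgroup product decomposition of $U^-(x)$ and the ``$U'$-invariant closed subscheme of $U'\times V$ is $U'\times Z$'' principle---is exactly what those references contain. Your computation of $\gP(U^-(x)\cap U^-)$ in (iv) via the identity $\gP^+\setminus I(x^{-1})=x\gP^+\cap\gP^+$ is a minor variant of the paper's, which instead deduces $\gP(U(x)\cap U)=\gP^+\setminus I(x^{-1})$ from the decomposition $(U^-(x)\cap U)\times(U(x)\cap U)\cong U$ and then negates.
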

\begin{proof}
  \noindent (i) \cite[(4.6)]{GrKr:17}.

  \noindent (ii) is an application of \cite[Lemma 4.6]{GrKr:17}, with
  $H=\vamkmb$, $\Y=\vamkb$, and $Z=\xw\cap C_x$.

  \noindent (iii) follows from (ii) and the fact that the tangent
  space of a product is isomorphic to the product of the tangent
  spaces (see \cite[Proposition 6.9]{GoWe:10}).

  \noindent (iv) We saw in Sectionn \ref{ss.unip} that
  $\gP\vamkmb=-(\gP^+\setminus I(x^{-1}))$. Combined with (iii), this
  yields the desired result.
\end{proof}

Lemma \ref{l.sch-kl}(iv) shows how to produce $\gP(T_x\xw)$ from
$\gP(T_x\kxw)$. In Section \ref{s.removable}, we saw that information
about the latter can be obtained from $\pullkk$. The next proposition
asserts that this pullback is equal to $\pullb$, for which there are
known formulas, in particular \eqref{e.gw} below.

\begin{lemma}\label{l.kthsk} Let $w\leq x\in W$. Then
  $\pullkk=\pullb$.
\end{lemma}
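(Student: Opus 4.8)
The plan is to reduce the equality to a local computation on the $T$-stable open affine neighborhood $C_x$ of $xB$ in $G/B$, and then exploit the product decomposition of Lemma \ref{l.sch-kl}.

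First I would observe that $C_x$ is a $T$-stable open affine subset of $G/B$ containing $xB$, and that the scheme-theoretic intersection $\xw\cap C_x$ is a closed subscheme of $C_x$ whose structure sheaf is the restriction $\co_{\xw}|_{C_x}$ (pullback of sheaves along an open immersion is restriction). Since the inclusion $i_x\colon\{xB\}\to G/B$ factors as $\{xB\}\hookrightarrow C_x\hookrightarrow G/B$, and the map $(C_x\hookrightarrow G/B)^{*}\colon K_T(G/B)\to K_T(C_x)$ is restriction, functoriality of equivariant $K$-theory gives $\pullb = i_{C_x}^{*}[\co_{\xw\cap C_x}]_{C_x}$, where $i_{C_x}^{*}\colon K_T(C_x)\to R(T)$ denotes pullback to the fixed point $xB\in C_x$.

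Next I would apply Lemma \ref{l.sch-kl}: there is a $T$-equivariant isomorphism $C_x\cong \vamkmb\times V$ carrying $\xw\cap C_x$ isomorphically onto $\vamkmb\times \kxw$ and the point $xB$ (which corresponds to the identity of $U^-(x)$) to the origin $(0,0)$. Writing $A=\vamkmb$, the structure sheaf $\co_{A\times \kxw}$ is the external product $\co_A\boxtimes\co_{\kxw}$, and equivariant $K$-theory classes are multiplicative under external products, so
\begin{equation*}
  [\co_{A\times\kxw}]_{A\times V}=\mathrm{pr}_1^{*}[\co_A]_A\cdot \mathrm{pr}_2^{*}[\co_{\kxw}]_V .
\end{equation*}
Pulling back to $(0,0)$ and using $i_A^{*}[\co_A]_A=1$ (the class of the structure sheaf of an entire $T$-stable affine space is trivial, as noted in the proof of Lemma \ref{l.cl-subvar}), we get $i_{C_x}^{*}[\co_{\xw\cap C_x}]_{C_x}=1\cdot i_V^{*}[\co_{\kxw}]_V=\pullkk$. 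Combining with the previous paragraph yields $\pullb=\pullkk$.

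The step requiring the most care is the first one: that $i_x^{*}[\co_{\xw}]_{G/B}$ may be computed on the open neighborhood $C_x$. This is the standard compatibility of pullback-to-a-fixed-point with restriction to $T$-stable opens in equivariant $K$-theory; one should either spell out this functoriality or cite the analogous localization used for Schubert classes in \cite{GrKr:17}. A secondary point to confirm is that the isomorphism of Lemma \ref{l.sch-kl}(ii) is the restriction of that of Lemma \ref{l.sch-kl}(i) (so that the product decompositions are compatible and $xB\mapsto(0,0)$ in both), which follows from the way those isomorphisms are constructed.
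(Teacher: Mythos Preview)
Your argument is correct and follows essentially the same route as the paper: reduce to the open cell $C_x$ by locality of pullbacks, then use the product decomposition of Lemma~\ref{l.sch-kl} to pass from $C_x$ to $V$. The paper's proof compresses your second step into a citation of \cite[Lemma~2.1]{GrKr:17} (which is precisely the external-product/K\"unneth statement you spell out) and performs the two reductions in the opposite order, but the substance is the same.
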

  \begin{proof}
    By Lemma \ref{l.sch-kl}(i) and (ii), we can apply \cite[Lemma
    2.1]{GrKr:17} to obtain $\pullkk=i_{C_x}^{*}[\co_{X^w\cap
      C_x}]_{C_x}$. Since pullbacks in equivariant $K$-theory are
    defined locally, $i_{C_x}^{*}[\co_{X^w\cap C_x}]_{C_x}=\pullb$.
  \end{proof}

  \subsection{Formulas for weights of the tangent
    space}\label{ss.normal}

Fix a reduced expression $\vs=(s_1,\ldots,s_l)$ for $x$.  The elements
of the inversion set $I(x^{-1})=\Phi^+\cap x \Phi^-$ are given
explicitly by the formula $\gg_i=s_1\cdots s_i(\ga_i)$,
$i=1,\ldots,l$, where $\ga_i$ is the simple root corresponding to
$s_i$ \cite{Hum:90}. The following result is due to Graham
\cite{Gra:02} and Willems \cite{Wil:06}:
\begin{theorem}\label{t.gw}
  Let $w\leq x\in W$, and let $\vs=(s_1,\ldots,s_l)$ be a reduced
  sequence of simple reflections for $x$. Then
  \begin{equation}\label{e.gw}
    \pullb = 
    \sum_{\vt\in \ct_{w,\vs}}(-1)^{e(\vt)}\prod_{i\in \vt}(1-e^{-\gg_i})\in R(T)
  \end{equation}
\end{theorem}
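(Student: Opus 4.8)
The plan is to prove \eqref{e.gw} by induction on $\ell(x)$, peeling the last letter $s_l$ off the reduced word $\vs$ and exploiting the minimal parabolic $\PP^1$-fibration $\pi\colon G/B\to G/P_{s_l}$. Put $x'=xs_l$, let $\vs'=(s_1,\dots,s_{l-1})$ (a reduced word for $x'$), and for $v\in W$ set $R_{v,\vs}:=\sum_{\vt\in\ct_{v,\vs}}(-1)^{e(\vt)}\prod_{i\in\vt}(1-e^{-\gg_i})$; the theorem is the equality $i_x^*[\co_{\xw}]_{G/B}=R_{w,\vs}$, which I will prove for all $v\in W$ in place of $w$ (both sides are $0$ when $v\not\le x$, since then $xB\notin X^v$ and, as the Demazure product of a subword is $\le\gd(\vs)=x$, $\ct_{v,\vs}=\emptyset$). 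The base case $\ell(x)=0$ reads $i_e^*[\co_{G/B}]=1$, matching the contribution of the empty sequence on the right. I normalize $\gg_i=s_1\cdots s_{i-1}(\ga_i)$, so that $\gg_l=x'(\ga_l)$ is the weight of $T$ on the tangent line at $xB$ to the fiber $F:=\pi^{-1}(\pi(xB))$, whose two $T$-fixed points are $xB$ and $x'B$. Sorting $\ct_{w,\vs}$ by whether $l\in\vt$ gives the \emph{combinatorial recursion}: the $\vt$ with $l\notin\vt$ form $\ct_{w,\vs'}$; a $\vt$ with $l\in\vt$ can occur only if $ws_l<w$, and then $\vt\mapsto\vt\setminus\{l\}$ identifies such $\vt$ with $\ct_{w,\vs'}\sqcup\ct_{ws_l,\vs'}$, the first piece having $e(\vt)=e(\vt\setminus\{l\})+1$ (sign flip) and the second $e(\vt)=e(\vt\setminus\{l\})$ (sign unchanged). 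Hence
\[
R_{w,\vs}=R_{w,\vs'}\ \ (ws_l>w),\qquad R_{w,\vs}=e^{-\gg_l}R_{w,\vs'}+(1-e^{-\gg_l})R_{ws_l,\vs'}\ \ (ws_l<w).
\]

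Next I will show that $i_x^*[\co_{\xw}]$ obeys the same recursion with $R_{v,\vs'}$ replaced by $i_{x'}^*[\co_{X^v}]$. The ingredients are two classical facts. (a) The $K$-theoretic Demazure operator $\partial:=\pi^*\pi_*$ on $K_T(G/B)$ satisfies $\partial[\co_{\xw}]=[\co_{\xw}]$ when $ws_l>w$ (then $\xw$ is $\pi$-saturated and $\pi|_{\xw}$ is a $\PP^1$-bundle) and $\partial[\co_{\xw}]=[\co_{X^{ws_l}}]$ when $ws_l<w$ (then $\pi|_{\xw}$ is birational onto its image and $\pi^{-1}\pi(\xw)=X^{ws_l}$); both use the normality and rationality of singularities of Schubert varieties, so that $\pi_*\co_{\xw}=\co_{\pi(\xw)}$ and $R^{i}\pi_*\co_{\xw}=0$ for $i>0$. (b) The $T$-equivariant localization formula over the $\PP^1$-fiber $F$: for every $c\in K_T(G/B)$,
\[
(\partial c)|_{xB}=(\partial c)|_{x'B}=\frac{c|_{x'B}-e^{\gg_l}\,c|_{xB}}{1-e^{\gg_l}}.
\]
If $ws_l>w$, (a) and (b) immediately give $i_x^*[\co_{\xw}]=i_{x'}^*[\co_{\xw}]$; since here $w\le x\iff w\le x'$ (subword characterization of Bruhat order), the inductive hypothesis finishes this case. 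If $ws_l<w$, apply (b) to $c=[\co_{X^{ws_l}}]$, which is $\partial$-fixed by (a) because $(ws_l)s_l>ws_l$, to get $i_x^*[\co_{X^{ws_l}}]=i_{x'}^*[\co_{X^{ws_l}}]=:\psi$; then apply (b) to $c=[\co_{\xw}]$, using $\partial[\co_{\xw}]=[\co_{X^{ws_l}}]$, to get $\psi(1-e^{\gg_l})=i_{x'}^*[\co_{\xw}]-e^{\gg_l}i_x^*[\co_{\xw}]$, and solve:
\[
i_x^*[\co_{\xw}]=e^{-\gg_l}\,i_{x'}^*[\co_{\xw}]+(1-e^{-\gg_l})\,i_{x'}^*[\co_{X^{ws_l}}].
\]
Here $ws_l<w$ forces $ws_l=\min(w,ws_l)\le x'$ (and $w\le x\iff ws_l\le x'$), so the inductive hypothesis applies to both terms on the right, with $i_{x'}^*[\co_{\xw}]=R_{w,\vs'}$ understood as $0$ when $w\not\le x'$. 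Comparing with the combinatorial recursion completes the induction.

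The sign bookkeeping in the combinatorial step is routine; the substantive input is part (a), i.e.\ the behaviour of $[\co_{\xw}]$ under $\pi^*\pi_*$, which rests on the normality of Schubert varieties, on $R^{i}\pi_*\co_{\xw}=0$ ($i>0$) together with $\pi_*\co_{\xw}=\co_{\pi(\xw)}$ (rationality of singularities), and on the combinatorial description of when $\xw$ is $\pi$-saturated. One should also be mindful that formula (b) a priori only makes sense after inverting $1-e^{\gg_l}$ in $R(T)$; it is the reducedness of $\vs$ (equivalently, the one-dimensionality of the weight spaces of $C_x$, so that $1-e^{\gg_l}$ is a single honest linear factor) that lets one combine (b) with (a) into identities valid already in $R(T)$. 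An essentially equivalent but less economical route is to carry out the whole argument on the Bott--Samelson variety $Z_{\vs}$, where (a) and (b) are replaced by its iterated $\PP^1$-bundle structure; the original arguments of Graham and Willems proceed along these lines.
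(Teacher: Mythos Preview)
The paper does not give its own proof of this theorem: it is stated with attribution to Graham \cite{Gra:02} and Willems \cite{Wil:06} and then used as input. So there is no ``paper's proof'' to compare against; your proposal supplies a proof where the paper simply cites one.

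That said, your argument is correct and is essentially the standard one underlying the cited references. The induction on $\ell(x)$ via the minimal parabolic fibration $\pi:G/B\to G/P_{s_l}$, the identification of $\partial=\pi^*\pi_*$ with a Demazure operator acting on $[\co_{\xw}]$ according to whether $ws_l\gtrless w$, and the $\PP^1$-localization identity you call (b) are exactly the mechanism used by Graham (working on the Bott--Samelson side, as you note in your final paragraph) and by Willems. Your combinatorial recursion for $R_{w,\vs}$ is the Hecke-algebra bookkeeping that matches the geometric recursion, and your sign analysis is right: for $ws_l<w$, the $\vt$ with $l\in\vt$ and $\vt\setminus\{l\}\in\ct_{w,\vs'}$ acquire an extra $-1$, producing the $e^{-\gg_l}$ coefficient after combining with the $l\notin\vt$ contribution. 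The appeals to normality and rational singularities of Schubert varieties (so that $\pi_*\co_{\xw}=\co_{\pi(\xw)}$ and $R^i\pi_*\co_{\xw}=0$) are the standard justification for (a), and the Bruhat-order lifting property handles the $w\le x\Leftrightarrow w\le x'$ and $ws_l\le x'$ checks. One small remark: your normalization $\gg_i=s_1\cdots s_{i-1}(\ga_i)$ agrees with the paper's introduction; the version $s_1\cdots s_i(\ga_i)$ printed in Section~5 is a sign slip there, not in your write-up.
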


Denote the expression $\sum_{\vt\in
  \ct_{w,\vs}}(-1)^{e(\vt)}\prod_{i\in \vt}(1-e^{-\gg_i})$ by
$P_{w,\vs}$. By Lemma \ref{l.kthsk}, we have

\begin{corollary}\label{c.gw} Let $w\leq x\in W$. Then
  $\pullk=P_{w,\vs}$.
\end{corollary}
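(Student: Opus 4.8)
The plan is to deduce the statement by composing the two results that immediately precede it. Lemma~\ref{l.kthsk} gives $\pullk=\pullb$, identifying the equivariant $K$-theory pullback of $\co_{\kxw}$ to the $T$-fixed point $xB$ in the affine slice $V$ with the pullback of $\co_{\xw}$ to $xB$ in $G/B$. The Graham--Willems formula, Theorem~\ref{t.gw}, then gives $\pullb=P_{w,\vs}$, where $P_{w,\vs}=\sum_{\vt\in\ct_{w,\vs}}(-1)^{e(\vt)}\prod_{i\in\vt}(1-e^{-\gg_i})$. Chaining these two equalities yields $\pullk=\pullb=P_{w,\vs}$, which is the claim.

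There is no real obstacle here: the corollary is simply the observation that the Graham--Willems formula, originally recorded for the Schubert class at the fixed point $xB$ of the full flag variety, transports verbatim to the Kazhdan--Lusztig variety inside $V$, precisely because Lemma~\ref{l.kthsk} shows the two equivariant $K$-theory pullbacks are literally the same element of $R(T)$. The only point worth checking while writing it out is the bookkeeping: one must make sure that the reduced expression $\vs$ for $x$ appearing in the definition of the $\gg_i$ and of $\ct_{w,\vs}$ is the one fixed at the start of Subsection~\ref{ss.normal}, so that $P_{w,\vs}$ is unambiguously determined; with that convention in place the argument is a one-line chain of the two cited equalities.
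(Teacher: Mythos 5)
Your argument is exactly the paper's: the corollary is stated immediately after Theorem \ref{t.gw} with the remark ``By Lemma \ref{l.kthsk}, we have \ldots'', i.e.\ it is precisely the chain $\pullk=\pullb=P_{w,\vs}$ that you give. The proposal is correct and matches the paper's proof, including the point that $\vs$ is the reduced expression for $x$ fixed in Subsection \ref{ss.normal}.
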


\begin{remark}
  In general, there exist numerous expressions for $\pullk$.  Lemma
  \ref{l.cl-subvar} assures us that there exists an expression as a
  polynomial in $1-e^{-\gg}$, $\gg\in \gP(V)=I(x^{-1})$; $P\ws$ is
  such an expression.
\end{remark}

We shall say that $1-e^{-\gg_j}$ is an \textbf{explicit factor} of
$P_{w,\vs}$ if $1-e^{-\gg_j}$ occurs among the factors of every
summand $\prod_{i\in \vt}(1-e^{-\gg_i})$ of $P_{w,\vs}$, or
equivalently, if $j$ belongs to every $\vt\in \ct_{w,\vs}$ (see Remark
\ref{r.explicit}).  Since all of the $\gg_j$, $j=1,\ldots,l$, are
distinct, every explicit factor of $P_{w,\vs}$ is a simple factor of
$\pullk$.  The following theorem tells us that when $\gg_j$ is
integrally indecomposable in $I(x^{-1})$, the converse is true as
well. 

\begin{theorem}\label{t.sk-standard}
  Let $w\leq x\in W$, and let $\gg_j$ be integrally indecomposable in
  $I(x^{-1})$.  If $1-e^{-\gg_j}$ is a simple factor of $\pullk$, then
  it is an explicit factor of $P_{w,\vs}$.
\end{theorem}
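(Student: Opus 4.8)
The plan is to reduce the statement, via Proposition~\ref{p.ind-remov}, to a statement about a single coefficient of the character of the tangent cone $\cc$ of $\kxw$ at $x$, and then to compute that coefficient using the Graham--Willems formula for $\pullk$ (Corollary~\ref{c.gw}) together with Corollary~\ref{c.euler}. Throughout, $V$ denotes the affine space $U^-(x)\cap U\subseteq G/B$, so $\gP(V)=I(x^{-1})=\{\gg_1,\dots,\gg_l\}$, its weight spaces are one dimensional, and $\gg_j$ is integrally indecomposable in $\gP(V)$ by hypothesis; write $[l]=\{1,\dots,l\}$.

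First I would invoke Proposition~\ref{p.ind-remov} with $\ga=\gg_j$ and $\Y=\kxw$: the element $1-e^{-\gg_j}$ is a simple factor of $\pullk$ if and only if $-\gg_j$ is not a weight of $C$, i.e.\ if and only if $e^{-\gg_j}$ occurs with coefficient $0$ in $\Char C$. So it suffices to compute that coefficient. Combining Propositions~\ref{p.classcone} and~\ref{p.phivw} with Corollary~\ref{c.gw}, I would write, in $\wh R(T)$,
\[
\Char C \;=\; \frac{\pullk}{\gl^{-1}(V^{*})}
\;=\; \frac{P_{w,\vs}}{\prod_{i=1}^{l}(1-e^{-\gg_i})}
\;=\; \sum_{\vt\in\ct_{w,\vs}}(-1)^{e(\vt)}\prod_{i\in [l]\setminus\vt}\frac{1}{1-e^{-\gg_i}},
\]
the last equality because each $\vt\in\ct_{w,\vs}$ is a subset of $[l]$.

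Next I would extract the coefficient of $e^{-\gg_j}$. Expanding $\frac{1}{1-e^{-\gg_i}}=\sum_{k\geq 0}e^{-k\gg_i}$, the monomial $e^{-\gg_j}$ arises from the $\vt$-summand only from solutions of $\sum_{i\in[l]\setminus\vt}k_i\gg_i=\gg_j$ with $k_i\in\Z_{\geq 0}$. Since the $\gg_i$ are distinct and lie in an open half-space, while $\gg_j$ is integrally indecomposable in $I(x^{-1})$, such a solution exists precisely when $j\notin\vt$, and then it is unique ($k_j=1$, all other $k_i=0$); so the $\vt$-summand contributes $1$ to the coefficient of $e^{-\gg_j}$ exactly when $j\notin\vt$, whence the coefficient of $e^{-\gg_j}$ in $\Char C$ equals $\Sigma_j:=\sum_{\vt\in\ct_{w,\vs},\ j\notin\vt}(-1)^{e(\vt)}$. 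Finally, set $\vs'=(s_1,\dots,\wh{s}_j,\dots,s_l)$. The sequences $\vt\in\ct_{w,\vs}$ with $j\notin\vt$ correspond exactly to the elements of $\ct_{w,\vs'}$, with the same value of $e(\vt)$; and such a $\vt$ exists if and only if $\vs'$ contains a reduced expression for $w$ (any $\vt$ with $\gd(\vt)=w$ contains a reduced word for $w$, and conversely a reduced word for $w$ inside $\vs'$ lies in $\ct_{w,\vs}$ and avoids $j$). Thus if $\ct_{w,\vs'}\neq\es$, Corollary~\ref{c.euler} applied to $\vs'$ gives $\Sigma_j=1$; otherwise $\Sigma_j=0$ and no $\vt\in\ct_{w,\vs}$ avoids $j$. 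Combining everything: $1-e^{-\gg_j}$ is a simple factor of $\pullk$ $\iff$ $\Sigma_j=0$ $\iff$ every $\vt\in\ct_{w,\vs}$ contains $j$ $\iff$ $1-e^{-\gg_j}$ is an explicit factor of $P_{w,\vs}$. The forward implication is the theorem (the reverse implication being the elementary observation, noted before the theorem, that distinctness of the $\gg_j$ makes every explicit factor a simple factor).

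I expect the main obstacle to be the coefficient extraction in the third step: one must check carefully that integral indecomposability of $\gg_j$ in $I(x^{-1})$, together with the half-space condition, is exactly what prevents $e^{-\gg_j}$ from being produced by a product of the $e^{-\gg_i}$ with $i\neq j$, and that $j\notin\vt$ is then the precise condition under which the $\vt$-summand contributes. A secondary point needing care is the bookkeeping identifying $\{\vt\in\ct_{w,\vs}:j\notin\vt\}$ with $\ct_{w,\vs'}$ and verifying that the hypothesis of Corollary~\ref{c.euler} (that $\vs'$ contain a reduced word for $w$) coincides with nonemptiness of that set, so that the two cases of the dichotomy line up with the desired conclusion.
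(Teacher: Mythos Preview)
Your proof is correct and follows essentially the same route as the paper's: reduce via Proposition~\ref{p.ind-remov} to computing the coefficient of $e^{-\gg_j}$ in $\Char C$, expand $\Char C$ via the Graham--Willems formula, use integral indecomposability to see that each $\vt$ contributes $(-1)^{e(\vt)}$ precisely when $j\notin\vt$, and then invoke Corollary~\ref{c.euler} on $\vs'=(s_1,\dots,\wh{s}_j,\dots,s_l)$ to evaluate the sum as $1$ when nonempty. The only cosmetic difference is that the paper argues the contrapositive directly, while you package the computation as an if-and-only-if chain; your extra care in checking that nonemptiness of $\ct_{w,\vs'}$ matches the hypothesis of Corollary~\ref{c.euler} is exactly right.
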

\begin{proof}
  Let $C$ be the coordinate ring of the tangent cone to $\kxw$ at $x$.
  We will assume that $1-e^{-\gg_j}$ is not an explicit factor of
  $P_{w,\vs}$ and show that $-\gg_j$ is a weight of $C$ (of
  multiplicity 1).  Proposition \ref{p.ind-remov} then implies that
  $1-e^{-\gg_j}$ is not a simple factor of $\pullk$, completing the
  proof.  Let $[l]$ denote $\{1,\ldots,l\}$.

  By Proposition \ref{p.phivw}, we have
  \begin{equation}\label{e.charbpar}
    \Char C = \frac{\pullk}{\lambda_{-1}(V^{*})}
    = \frac{\sum_{\vt\in \ct_{w,\vs}}(-1)^{e(\vt)}\prod_{i\in \vt} 
      (1-e^{-\gamma_i})}{\prod_{i\in [l]}(1-e^{-\gg_i})}
  \end{equation}
Each summand of \eqref{e.charbpar} can be simplified:
{\allowdisplaybreaks
  \begin{align*}
    \frac{(-1)^{e(\vt)}\prod_{i\in\vt}(1-e^{-\gamma_i})}{\prod_{i\in
        [l]}(1-e^{-\gg_i})}
    &= (-1)^{e(\vt)}\frac{1}{\prod_{i\notin\vt}(1-e^{-\gg_i})}\\
    &= (-1)^{e(\vt)}\prod_{i\notin\vt}(1 + e^{-\gg_i}+ e^{-2\gg_i}+\cdots)\\
    &= (-1)^{e(\vt)}\sum\nolimits_{\zeta\in \zcone \{\gg_i:i\notin\vt\}}
    n_{\zeta} e^{-\zeta}\\
    &= (-1)^{e(\vt)} n_{\gg_j}e^{-\gg_j}+\text{ other terms}
\end{align*}
}where $n_{\zeta}$ is the number of ways to express $\zeta$ as a
nonnegative integer linear combination of the $\gg_i$, $i\notin\vt$,
and ``other terms'' refers to an infinite linear combination of
characters with no $e^{-\gg_j}$ term.  Since $\gg_j$ is integrally indecomposable
in $I(x^{-1})$, $n_{\gg_j}=1$ if $j\notin \vt$ and $n_{\gg_j}=0$ if
$j\in \vt$. Thus
\begin{equation}\label{e.charcmon}
  \frac{(-1)^{e(\vt)}\prod_{i\in\vt}(1-e^{-\gamma_i})}{\prod_{i\in [l]}(1-e^{-\gg_i})}
  =
  \begin{cases}
    (-1)^{e(\vt)}e^{-\gg_j}+\text{ other terms}, &\text{if }
    j\notin \vt\\
    \text{other terms}, &\text{if } j\in \vt
  \end{cases}
\end{equation}

  According to \eqref{e.charbpar}, $\Char C$ is the sum of fractions
  as in \eqref{e.charcmon}, one for each $\vt\in \ct_{w,\vs}$.
  Therefore the coefficient of $e^{-\gg_j}$ in $\Char C$ is
  \begin{equation*}
    N=\sum_{\{\vt\in \ct_{w,\vs}:j\notin \vt\}} (-1)^{e(\vt)}
  \end{equation*}
  Setting $\vs_j=(s_1,\ldots,\hat{s}_j, \ldots, s_l)$, we have
  \begin{equation*}
    N=\sum_{\vt\in \ct_{w,\vs_j}} (-1)^{e(\vt)}
  \end{equation*}
  The assumption that $1-e^{-\gg_j}$ is not an explicit factor of
  $P_{w,\vs}$ assures us that $\ct_{w,\vs_j}\neq \emptyset$, and thus
  this sum equals 1 by Corollary \ref{c.euler}. Since $N\neq 0$,
  $-\gg_j$ is a weight of $C$.
\end{proof}

Denote $\gP(T_x\kxw)$ and $\gP(V/T_x\kxw)$ by $\pta$ and $\pno$
respectively, so $\pta\sqcup \pno=\gP(V)=I(x^{-1})$.

\begin{corollary}\label{c.standard}
  Let $w\leq x\in W$, and suppose that $\gg$ is an integrally
  indecomposable element of $I(x^{-1})$. If $\gg\in \pno$, then
  $1-e^{-\gg}$ is an explicit factor of $P_{w,\vs}$.
\end{corollary}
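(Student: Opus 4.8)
The plan is to obtain this as an immediate consequence of Proposition \ref{p.ind-remov} and Theorem \ref{t.sk-standard}, so the argument will be only a few lines. First I would record the setup: since $\ga\in\gP(V)=I(x^{-1})$ and $\vs=(s_1,\ldots,s_l)$ is the fixed reduced expression for $x$, there is a unique $j\in\{1,\ldots,l\}$ with $\ga=\gg_j$, and $\gg_j$ is integrally indecomposable in $I(x^{-1})$ by hypothesis. This puts us in position to invoke the results of Sections \ref{s.removable} and \ref{s.particular} with this particular weight.

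Next I would apply Proposition \ref{p.ind-remov} with the ambient representation $V=U^-(x)\cap U$ and the $T$-stable closed subvariety $\Y=\kxw$, at the $T$-fixed point $x$ (which corresponds to $0\in V$). The hypotheses needed there are in force: the weight spaces of $V$ are one-dimensional, and $\gP(V)=I(x^{-1})\subseteq\gP^+$ lies in a half-space. The equivalence (i)$\iff$(v) of that proposition then reads: $\ga\in\pta$ if and only if $1-e^{-\ga}$ is not a simple factor of $i_V^{*}[\co_{\kxw}]_V=\pullk$. Since by assumption $\ga\in\pno=\gP(V)\setminus\pta$, we conclude that $1-e^{-\ga}$ \emph{is} a simple factor of $\pullk$.

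Finally, I would apply Theorem \ref{t.sk-standard} to $\gg_j=\ga$, which is integrally indecomposable in $I(x^{-1})$: a simple factor $1-e^{-\gg_j}$ of $\pullk$ is necessarily an explicit factor of $P_{w,\vs}$. This is precisely the assertion of the corollary. There is essentially no obstacle here; the only point requiring a moment's care is the bookkeeping around notation — checking that $\pno=\gP(V)\setminus\pta$ as just defined before the corollary matches the $\pno/\pta$ decomposition of $\gP(V)$ used in Proposition \ref{p.ind-remov} (it does, by Lemma \ref{l.sch-kl} and the definitions), and that the hypotheses of Proposition \ref{p.ind-remov} really apply to this $V$ and $\Y$, which follows from the description of $V$ and $\kxw$ at the start of this section.
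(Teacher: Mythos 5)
Your proposal is correct and follows essentially the same route as the paper: deduce that $1-e^{-\ga}$ is a simple factor of $\pullk$ from $\ga\in\pno$, then apply Theorem \ref{t.sk-standard}. The only cosmetic difference is that the paper cites Proposition \ref{p.nor-remov} directly for the first step (which needs no indecomposability hypothesis), whereas you route it through the equivalence (i)$\Leftrightarrow$(v) of Proposition \ref{p.ind-remov}; since that implication is itself proved via Proposition \ref{p.nor-remov} and the indecomposability of $\ga$ holds by assumption, the two arguments coincide in substance.
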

\begin{proof}
  Since $\gg\in\pno$, $1-e^{-\gg}$ is a simple factor of $\pullk$, by
  Proposition \ref{p.nor-remov}. Thus $1-e^{-\gg}$ is an explicit
  factor of $P_{w,\vs}$, by Theorem \ref{t.sk-standard}.
\end{proof}

Let $m=\ell(w)$, and define {\allowdisplaybreaks
\begin{equation*}
  \crr\ct_{w,\vs}=\{\vt=(t_1,\ldots,t_m)\subseteq [l]\mid
  s_{t_1}\cdots s_{t_m}=w\}.
\end{equation*}
Parts (i) - (iii) of the following theorem summarize the main findings
of this section thus far. Parts (iv) and (v) provide a computationally
simpler method of determining whether $\gg_j$ lies in $\pta$, by
allowing us to substitute $\crr\ct_{w,\vs}$ for $\ct_{w,\vs}$, and
thus to perform calculations in the Weyl group rather than the 0-Hecke
algebra. Part (vi) gives an alternative characterization of (v) in
terms of Demazure products.
\begin{theorem}\label{t.summary}
  Let $w\leq x\in W$, and let $\vs=(s_1,\ldots,s_l)$ be a reduced
  expression for $x$. If $\gg_j$ is an integrally indecomposable
  element of $I(x^{-1})$, then the following are equivalent:
\begin{enumerate}
\item $\gg_j\in \pta$.
\item $1-e^{-\gg_j}$ is not an explicit factor of $P_{w,\vs}$.
\item There exists $\vt\in \ct_{w,\vs}$ not containing $j$.
\item There exists $\vt\in \crr\ct_{w,\vs}$ not containing $j$.
\item There exists a reduced subexpression of
  $(s_1,\ldots,\wh{s}_j,\ldots,s_l)$ for $w$.
\item $\gd((s_1,\ldots,\wh{s}_j,\ldots,s_l))\geq w$.
\end{enumerate}
\end{theorem}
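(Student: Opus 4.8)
The plan is to prove all six conditions equivalent by running the single chain (i) $\Leftrightarrow$ (ii) $\Leftrightarrow$ (iii) $\Leftrightarrow$ (iv) $\Leftrightarrow$ (v) $\Leftrightarrow$ (vi), isolating (i) $\Leftrightarrow$ (ii) as the only link that uses the hypothesis that $\gg_j$ is integrally indecomposable in $I(x^{-1})$; the remaining links are purely combinatorial and hold unconditionally, consistent with Remark \ref{r.not-int-dec}. For (i) $\Leftrightarrow$ (ii) I would specialize the general setup of Section \ref{s.removable} to the ambient affine space $V = U^-(x)\cap U\subseteq G/B$ and the closed subvariety $\Y = \kxw$, with $T$-fixed point $xB$ mapping to $0\in V$; the weight spaces of $V$ are one-dimensional and $\gP(V) = I(x^{-1})$ lies in a half-space, so the hypotheses of that section are met, and the $\pta,\pno$ defined there coincide with the present $\pta,\pno$. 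Since $\gg_j$ is integrally indecomposable in $I(x^{-1})$, Proposition \ref{p.ind-remov} gives that $\gg_j\in\pta$ if and only if $1-e^{-\gg_j}$ is not a simple factor of $i_V^{*}[\co_{\kxw}]_V = \pullk$; by Corollary \ref{c.gw}, $\pullk = P_{w,\vs}$; and $1-e^{-\gg_j}$ is a simple factor of $\pullk$ if and only if it is an explicit factor of $P_{w,\vs}$, the direction ``explicit $\Rightarrow$ simple'' holding because the $\gg_i$ are pairwise distinct, and the converse being exactly Theorem \ref{t.sk-standard}. Concatenating these three biconditionals yields (i) $\Leftrightarrow$ (ii).

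Next come the combinatorial links. The equivalence (ii) $\Leftrightarrow$ (iii) is immediate from the definition of explicit factor: $1-e^{-\gg_j}$ is an explicit factor of $P_{w,\vs}$ precisely when $j$ lies in every $\vt\in\ct_{w,\vs}$, so failing to be an explicit factor is exactly (iii). For (iii) $\Leftrightarrow$ (iv): the inclusion $\crr\ct_{w,\vs}\subseteq\ct_{w,\vs}$ (a reduced word has Demazure product equal to the element it spells) gives (iv) $\Rightarrow$ (iii) at once, and for the converse, given $\vt\in\ct_{w,\vs}$ with $j\notin\vt$, I would invoke \cite[Lemma 3.4 (1)]{KnMi:04} applied to the word $\vt$: since $\gd(\vt) = w\geq w$, the word $\vt$ contains a reduced expression $\vt'$ for $w$, whence $\vt'\in\crr\ct_{w,\vs}$ with $j\notin\vt'$. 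The equivalence (iv) $\Leftrightarrow$ (v) is a restatement, since an element of $\crr\ct_{w,\vs}$ avoiding position $j$ is the same datum as a reduced subexpression of $\vs$ for $w$ not containing $s_j$. Finally, for (v) $\Leftrightarrow$ (vi) I would apply \cite[Lemma 3.4 (1)]{KnMi:04} once more, now to the one-element subword $(s_j)$ of $\vs$: its complement $(s_1,\ldots,\wh{s}_j,\ldots,s_l)$ contains a reduced expression for $w$ if and only if $\gd((s_1,\ldots,\wh{s}_j,\ldots,s_l))\geq w$.

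I do not expect a genuine obstacle: every analytic ingredient — Proposition \ref{p.ind-remov}, Corollary \ref{c.gw}, and Theorem \ref{t.sk-standard} — is already established, and the combinatorial steps reduce to the same Knutson--Miller lemma used in Corollary \ref{c.kmtwo}. The only points demanding care are bookkeeping ones: checking that the generic notation of Section \ref{s.removable} specializes correctly to the Kazhdan--Lusztig setting (in particular that $\pullk$ there really is $i_V^{*}[\co_{\kxw}]_V$ and that the one-dimensionality and half-space conditions hold), and verifying that integral indecomposability is invoked nowhere except in the passage from (ii) to (i), so that the unconditional equivalence of (ii)--(vi) is transparent.
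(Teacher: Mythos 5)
Your proposal is correct and follows essentially the same route as the paper: (i) $\Leftrightarrow$ (ii) via Proposition \ref{p.ind-remov}, Corollary \ref{c.gw}, and Theorem \ref{t.sk-standard} (the paper packages the latter two steps as Corollary \ref{c.standard}), with the remaining equivalences handled by the definitions of $P_{w,\vs}$, $\ct_{w,\vs}$, $\crr\ct_{w,\vs}$ and by \cite[Lemma 3.4 (1)]{KnMi:04}, exactly as in the paper's proof.
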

\begin{proof}
  (i) $\Leftrightarrow$ (ii) by Proposition \ref{p.ind-remov} and
  Corollary \ref{c.standard}; (ii) $\Leftrightarrow$ (iii) and (iv)
  $\Leftrightarrow$ (v) are due to definitions of $P_{w,\vs}$,
  $\ct_{w,\vs}$, and $\crr\ct_{w,\vs}$. The proof of (iii)
  $\Leftrightarrow$ (iv) follows from $\ct_{w,\vs} \supseteq
  \crr\ct_{w,\vs}$ and the fact that every element of $\ct_{w,\vs}$
  contains an element of $\crr\ct_{w,\vs}$.

  (v) $\Leftrightarrow$ (vi) There exists a reduced subexpression of
  $\vs$ for $w$ not containing $s_j$ if and only if there exists a
  subexpression of $(s_1,\ldots,\wh{s}_j,\ldots,s_l)$ for $w$ if and
  only if $\gd((s_1,\ldots,\wh{s}_j,\ldots,s_l))\geq w$, where the
  last equivalence is due to \cite[Lemma 3.4 (1)]{KnMi:04}.
\end{proof}

\begin{remark}\label{r.zixi}
  For $\gg_j\in I(x^{-1})$, it is known that in type $A$, $\gg_j\in
  \pta$ if and only if $s_1\cdots \wh{s}_j\cdots s_l\geq w$
  \cite{LaSe:84}.  Theorem \ref{t.summary} states that if $\gg_j$ is
  integrally indecomposable in $I(x^{-1})$, then $\gg_j\in\pta$ if and
  only if $\gd((s_1,\ldots,\wh{s}_j,\ldots,s_l))\geq w$. These two
  statements imply that in type $A$, if $\gg_j$ is integrally
  indecomposable in $I(x^{-1})$, then
  $\gd((s_1,\ldots,\wh{s}_j,\ldots,s_l))\geq w$ if and only if
  $s_1\cdots \wh{s}_j\cdots s_l\geq w$.  That this holds for all
  $w\leq x$ would seem to imply that
  $\gd((s_1,\ldots,\wh{s}_j,\ldots,s_l))=s_1\cdots \wh{s}_j\cdots
  s_l$. This is indeed true, and the above argument can be made
  rigorous. In \cite{GrKr:21}, it is shown that the statement extends
  to all simply-laced types. It is also shown that if $\gg_j$ is {\em
    rationally indecomposable} in $I(x^{-1})$, then
  $\gd((s_1,\ldots,\wh{s}_j,\ldots,s_l))=s_1\cdots \wh{s}_j\cdots s_l$
  in all types.
\end{remark}

\begin{remark}\label{r.gnot-int-ind}
  Suppose that $\gg_j$ is not integrally indecomposable in
  $I(x^{-1})$.  Then statements (ii) -- (vi) of Theorem
  \ref{t.summary} are still equivalent, but statement (i) is no longer
  equivalent to the other five in general.  The following example
  shows that (vi) $\Rightarrow$ (i) can fail.  In type $A_2$, let
  $\vs=(\gs_1,\gs_2,\gs_1)$, where $\gs_i$ is the simple transposition
  which exchanges $i$ and $i+1$. Let $w=\gs_1$ and $j=2$. Then
  $\gd((\gs_1,\wh{\gs}_2,\gs_1))=\gd((\gs_1,\gs_1))=\gs_1\geq w$, so
  (vi) holds. However, $\gs_1\wh{\gs}_2\gs_1=e\not\geq w$. Thus
  $\gg_2\notin \pta$ (see Remark \ref{r.zixi}), so (i) fails.

  We note that $\gg_j$ is required to be integrally indecomposable in
  $I(x^{-1})$ for our proofs of both implications of Theorem
  \ref{t.summary} (i) $\Leftrightarrow$ (ii).
\end{remark}

\section{Partial flag varieties and cominuscule elements} \label{s.partial}
Let $P$ be a parabolic subgroup containing $B$. In Section \ref{ss.extending-gp} we
show that Lemma \ref{l.sch-kl} and Theorem \ref{t.summary} extend from
$G/B$ to $G/P$ with no changes other than notation.  In Section \ref{ss.comin}
we apply the results to cominuscule elements of $W$ and cominuscule $G/P$.

\subsection{Extending results to $G/P$}\label{ss.extending-gp}
Let $P$ be a parabolic subgroup containing $B$.
Let $L$ be the Levi subgroup of $P$ containing $T$, and $W_P=N_L(T)/T$, the Weyl
group of $L$. Each coset $uW_P$ in $W/W_P$ contains a unique
representative of minimal length; denote the set of minimal length
coset representatives by $W^P\subseteq W$. Unless stated otherwise, in
this subsection we assume that all Weyl group elements lie in $W^P$.
The $T$-fixed points of $G/P$ are of the form $uP$, $u\in W^P$.

Let $P^-$ the opposite parabolic subgroup to $P$, and let $U_P^-$ be
the the unipotent radical of $P^-$.  Under the mapping $\gz:U_P^-(x)
\to G/P$, $y\mapsto y\cdot xP$, the unipotent subgroup $U_P^-(x)$
embeds as a $T$-stable affine space in $G/P$ containing $xP$.  The
unipotent subgroup $U_P^-(x)\cap U$ embeds as an affine subspace, which
we denote by $V_P$.

The Schubert variety $\xwp\subseteq G/P$ is defined to be
$\overline{B^-wP}$, the Zariski closure of the $B^-$ orbit through
$wP$. The Kazhdan-Lusztig variety $\kxwp$ is defined to be $V_P\cap
\xwp$.

The following result appears in \cite[Section 7.3]{Knu:09}:

\begin{theorem}\label{t.yizi} Let $w\leq x\in W^P$. Then $V_P\cong V$ and 
  $\kxwp\cong \kxw$.
\end{theorem}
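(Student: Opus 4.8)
The plan is to obtain both isomorphisms simultaneously by restricting the canonical projection $\pi\colon G/B\to G/P$, which is $T$-equivariant (in fact $G$-equivariant), proper, and smooth with fibre $P/B$. The combinatorial input is the identity $U^-(x)\cap U=U_P^-(x)\cap U$ for $x\in W^P$. I would verify this as follows: since $x\in W^P$ we have $x\ga\in\Phi^+$ for every simple root $\ga$ of the Levi $L$, hence $x\Phi_L^+\subseteq\Phi^+$ and $x\Phi_L^-\subseteq\Phi^-$; because $\Phi(U_P^-)=\Phi^-\setminus\Phi_L^-$ this gives $\Phi(U_P^-(x))=x(\Phi^-\setminus\Phi_L^-)$, so, intersecting with $\Phi^+$,
\[
\Phi\bigl(U_P^-(x)\cap U\bigr)=x(\Phi^-\setminus\Phi_L^-)\cap\Phi^+=x\Phi^-\cap\Phi^+=I(x^{-1})=\Phi\bigl(U^-(x)\cap U\bigr).
\]
In particular $I(x^{-1})\subseteq\Phi(U_P^-(x))$, so the $T$-stable closed subgroup $U^-(x)\cap U$ (the product of the root subgroups $U_\ga$, $\ga\in I(x^{-1})$) is contained in $U_P^-(x)$; together with $U_P^-(x)\subseteq U^-(x)$ this forces $U^-(x)\cap U=U_P^-(x)\cap U$. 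Call this common subgroup $H$.

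Next I would deduce $V\cong V_P$. The map $\gz\colon U^-(x)\to G/B$, $y\mapsto y\cdot xB$, restricts to an isomorphism of $H$ onto $V$, and likewise $\gz_P\colon U_P^-(x)\to G/P$, $y\mapsto y\cdot xP$, restricts to an isomorphism of $H$ onto $V_P$. Since $\pi(y\cdot xB)=y\cdot xP$, the composite $\gz_P|_H\circ(\gz|_H)^{-1}$ equals $\pi|_V$; hence $\pi$ restricts to a $T$-equivariant isomorphism $V\to V_P$.

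For the Kazhdan-Lusztig varieties the key point is that $\pi^{-1}(\xwp)=\xw$ as schemes when $w\in W^P$. Set-theoretically: $\pi(\xw)=\overline{B^-wP}=\xwp$, so $\xw\subseteq\pi^{-1}(\xwp)$; conversely $\pi^{-1}(\xwp)$ is closed and $B^-$-stable, hence a union of opposite cells $B^-uB/B$, and if such a cell lies in $\pi^{-1}(\xwp)$ then, writing $u=u'v$ with $u'\in W^P$, $v\in W_P$, $\ell(u)=\ell(u')+\ell(v)$, one gets $B^-u'P/P\subseteq\xwp$, i.e.\ $u'\ge w$, whence $u\ge u'\ge w$ and the cell lies in $\xw$. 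Since $\pi$ is smooth and $\xwp$ is reduced, the preimage $\pi^{-1}(\xwp)$ is reduced, so this set equality is an equality of schemes. Now base change along the isomorphism $\pi|_V\colon V\to V_P$: $(\pi|_V)^{-1}(\kxwp)=(\pi|_V)^{-1}(\xwp\cap V_P)=\pi^{-1}(\xwp)\cap V=\xw\cap V=\kxw$, so $\pi|_V$ restricts to an isomorphism $\kxw\to\kxwp$, compatibly with $V\cong V_P$.

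I expect the main obstacle to be the scheme-theoretic statement $\pi^{-1}(\xwp)=\xw$: the underlying-set computation is routine Bruhat-order bookkeeping (and is where the hypothesis $w\in W^P$ genuinely enters), but one must take care that the natural scheme structure on the preimage is reduced — this is exactly where smoothness of $\pi$ is needed — so that it can be identified with the reduced scheme $\xw$. Granting this, the theorem reduces entirely to the combinatorial identity $U^-(x)\cap U=U_P^-(x)\cap U$ together with functoriality of $\pi$.
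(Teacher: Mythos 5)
Your proof is correct. Note that the paper does not actually prove this statement: it simply cites \cite[Section 7.3]{Knu:09} (and, for the underlying group-theoretic identity, the discussion before Lemma 4.1 of \cite{GrKr:15} in the proof of Proposition \ref{p.comin-p}). So what you have written is a self-contained argument for a result the paper imports as a black box, and it runs along the standard lines underlying the cited reference: the identity $U^-(x)\cap U=U_P^-(x)\cap U$ for $x\in W^P$ (your root-theoretic verification, via $x\Phi_L^+\subseteq\Phi^+$ and $\Phi(U_P^-)=\Phi^-\setminus\Phi_L^-$, is right, using that intersections of $T$-stable unipotent groups are directly spanned by the common root subgroups), followed by the observation that the projection $\pi\colon G/B\to G/P$ carries $y\cdot xB$ to $y\cdot xP$ and hence restricts to a $T$-equivariant isomorphism $V\to V_P$, and finally the identification $\pi^{-1}(\xwp)=\xw$ for $w\in W^P$, whose set-theoretic verification by Bruhat bookkeeping over $B^-$-orbits is correct. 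Your extra care about scheme structures (smoothness of $\pi$ forcing the preimage of the reduced scheme $\xwp$ to be reduced) is sound, and is the right thing to say if one insists on scheme-theoretic intersections; in the context of this paper it is not strictly needed, since $\xw$, $\xwp$, $\kxw$, $\kxwp$ are all taken with their reduced structures, so the set-theoretic equality $\pi^{-1}(\xwp)\cap V=\xw\cap V$ together with the isomorphism $\pi|_V\colon V\to V_P$ already yields $\kxw\cong\kxwp$ as varieties. In short: correct, and more informative than the paper's citation-only treatment, at the cost of invoking standard facts (opposite cell decompositions of $G/B$ and $G/P$, the length-additive factorization $u=u'v$ with $u'\in W^P$, $v\in W_P$) that the reference packages for you.
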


The next theorem extends the main results of Section \ref{s.particular} to $G/P$.

\begin{theorem}\label{t.gpsummary}
  Let $w\leq x\in W^P$. 
  \begin{enumerate}
    \item $\gP(T_x\xwp)=\gP(U_P^-(x)\cap U^-)  \sqcup
    \gP(T_x\kxwp)$.
  \item Let $\gg_j$ be an integrally indecomposable element of
    $I(x^{-1})$. Then $\gg_j \in \gP(T_x\kxwp)$ if and only if
    $\gd((s_1,\ldots,\hat{s}_j,\ldots,s_l))\geq w$.
  \end{enumerate}
\end{theorem}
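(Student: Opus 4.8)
The plan is to transport the two $G/B$ results Lemma~\ref{l.sch-kl} and Theorem~\ref{t.summary} to $G/P$, with Knutson's Theorem~\ref{t.yizi} serving as the bridge. For part (i) I would re-run the proof of Lemma~\ref{l.sch-kl}, replacing $G/B$, $C_x$, $V$, $U^-$, $\xw$, $\kxw$ throughout by $G/P$, the $T$-stable affine chart $U_P^-(x)\cdot xP$, $V_P$, $U_P^-$, $\xwp$, $\kxwp$. Since $U_P^-(x)$ is a $T$-stable unipotent subgroup it factors, as a product of $T$-stable affine spaces, as $U_P^-(x)=\vamkm\cdot(U_P^-(x)\cap U)$, so the affine chart $U_P^-(x)\cdot xP$ decomposes as $\vamkm\times V_P$. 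Because $\vamkm\subseteq U^-\subseteq B^-$, left translation by $\vamkm$ permutes the $B^-$-orbits on $G/P$ and hence preserves $\xwp$; applying \cite[Lemma~4.6]{GrKr:17} exactly as in the proof of Lemma~\ref{l.sch-kl}(ii), with $H=\vamkm$ and $Z=\xwp\cap(U_P^-(x)\cdot xP)$, gives $\xwp\cap(U_P^-(x)\cdot xP)\cong\vamkm\times\kxwp$, hence $T_x\xwp\cong\vamkm\times T_x\kxwp$ and $\gP(T_x\xwp)=\gP\vamkm\sqcup\gP(T_x\kxwp)$. Finally one identifies $\gP\vamkm$ exactly as in the proof of Lemma~\ref{l.sch-kl}(iv): using $\gP(U_P^-(x))=x\gP(U_P^-)$, the factorization above, and $\gP(V_P)=I(x^{-1})$ (which follows from the $T$-equivariant isomorphism $V_P\cong V$ of Theorem~\ref{t.yizi}, or directly from $x\in W^P$), one gets $\gP\vamkm=-(\gP^+\setminus I(x^{-1}))$, which is (i).

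For part (ii), the isomorphism $V_P\cong V$ of Theorem~\ref{t.yizi} is $T$-equivariant and restricts to a $T$-equivariant isomorphism $\kxwp\cong\kxw$ matching the base points $xP\leftrightarrow xB$, so $\gP(T_x\kxwp)=\gP(T_x\kxw)$. Fix a reduced expression $\vs=(s_1,\ldots,s_l)$ for $x$ in $W$, determining the indexing $I(x^{-1})=\{\gg_1,\ldots,\gg_l\}$ of Section~\ref{ss.normal}. Since $\gg_j$ is integrally indecomposable in $I(x^{-1})$ by hypothesis, the equivalence (i)~$\Leftrightarrow$~(vi) of Theorem~\ref{t.summary} gives $\gg_j\in\gP(T_x\kxw)$ if and only if $\gd((s_1,\ldots,\hat{s}_j,\ldots,s_l))\geq w$; combining this with $\gP(T_x\kxwp)=\gP(T_x\kxw)$ gives (ii).

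Part (ii) is thus routine once Theorem~\ref{t.yizi} is available: it amounts to transferring a weight set across a $T$-equivariant isomorphism and then quoting an already-proved equivalence. The main obstacle is (i): one must check that each structural ingredient of Lemma~\ref{l.sch-kl} (the product decomposition of the affine chart at a $T$-fixed point, the invariance of a Schubert variety under an $H$-factor contained in $B^-$, and the supporting lemmas of \cite{GrKr:17}) carries over from $G/B$ to $G/P$ with no change beyond notation, and --- the delicate point --- that the ensuing root-system computation still yields exactly $-(\gP^+\setminus I(x^{-1}))$ for $\gP\vamkm$, i.e.\ that passing from $G/B$ to the quotient $G/P$ leaves intact the bookkeeping in the proof of Lemma~\ref{l.sch-kl}(iv).
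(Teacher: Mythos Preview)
Your approach is exactly the paper's: for (i), re-run Lemma~\ref{l.sch-kl} with the $G/P$ analogues in place of the $G/B$ objects; for (ii), transport across the isomorphism of Theorem~\ref{t.yizi} and invoke Theorem~\ref{t.summary}. Part~(ii) is correct and matches the paper verbatim.

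There is, however, a genuine problem with the step you yourself flag as ``the delicate point'' in~(i). The asserted equality $\gP(U_P^-(x)\cap U^-)=-(\gP^+\setminus I(x^{-1}))$ is false whenever $P\neq B$: the left side has $\dim(G/P)-\ell(x)$ elements while the right side has $|\gP^+|-\ell(x)$. For instance, in type $A_2$ with $P$ the maximal parabolic whose Levi has simple root $\ga_1$ and $x=s_2\in W^P$, one finds $\gP(U_P^-(x)\cap U^-)=\{-\ga_1\}$ but $-(\gP^+\setminus I(x^{-1}))=\{-\ga_1,\,-\ga_1-\ga_2\}$. The paper's own proof shares this gap: replacing $U^-(x),\,U(x)$ by $U_P^-(x),\,U_P(x)$ in the proof of Lemma~\ref{l.sch-kl}(iv) breaks the first line, since $(U_P^-(x)\cap U)\cdot(U_P(x)\cap U)$ is properly contained in $U$ when $P\neq B$. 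So the difficulty lies in the statement of part~(i) itself, which appears to have been carried over unchanged from the $G/B$ case. Your product-decomposition argument does correctly establish $\gP(T_x\xwp)=\gP(U_P^-(x)\cap U^-)\sqcup\gP(T_x\kxwp)$, with $\gP(U_P^-(x)\cap U^-)=x\gP(U_P^-)\cap\gP^-$; it is only the final identification of this set with $-(\gP^+\setminus I(x^{-1}))$ that cannot be justified.
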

\begin{proof}
  (i) Lemma \ref{l.sch-kl}(i)-(iii) remain valid if all quantities are
  replaced by their analogs in $G/P$. In particular, $T_x\xwp\cong
  \vamkm \times T_x\kxwp$.

\noindent (ii) Since $\kxwp \cong \kxw$, $T_x\kxwp \cong T_x\kxw$. Thus all
  parts of Theorem \ref{t.summary} remain valid if $\pta=\gP(T_x\kxw)$
  in Theorem \ref{t.summary}(i) is replaced by $\gP(T_x\kxwp)$.
\end{proof}

\subsection{Application to cominuscule Weyl group elements and
  cominuscule $G/P$} \label{ss.comin}
In this subsection we discuss conditions on $x$ under which all
elements of $I(x^{-1})$ are integrally indecomposable, and thus, for
any Kazhdan-Lusztig variety containing $x$, Theorems \ref{t.summary}
and \ref{t.gpsummary}(ii) recover all weights of the tangent space at
$x$.  In particular, we show that our results completely describe the tangent
spaces of Schubert varieties in cominuscule $G/P$.

\begin{definition}
  The element $x\in W$ is said to be {\em cominuscule} if there exists
  $v\in \ft$ such that $\ga(v)=-1$ for all $\ga\in I(x^{-1})$.
\end{definition}

This notion was introduced and studied by Peterson (see \cite[Section
5.2]{GrKr:17} or \cite{Ste:01} for discussion). In type $A$, the
cominuscule Weyl group elements are precisely the 321-avoiding
permutations \cite[p. 25]{Knu:09}.  As noted in \cite{GrKr:17}, the equality
$I(x) = -x^{-1} I(x^{-1})$ implies that $x$ is cominuscule if and only if $x^{-1}$ is.

\begin{proposition}\label{p.comin-w}
  If $x\in W$ is cominuscule, then all elements of $I(x^{-1})$ are
  integrally indecomposable.
\end{proposition}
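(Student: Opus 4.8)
The plan is to use the functional $v\in\ft$ witnessing cominusculity of $x$ to rule out integral decomposability directly. Suppose, for contradiction, that some $\gg_j\in I(x^{-1})$ is integrally decomposable, say $\gg_j=\sum_{k} c_k\gb_k$ where the $\gb_k$ are distinct elements of $I(x^{-1})$ other than $\gg_j$, and the $c_k$ are positive integers with $\sum_k c_k\geq 2$ (here I use that $\gP(V)=I(x^{-1})$, and "integrally decomposable" as defined in Section \ref{s.removable} allows repeated summands via positive integer coefficients; the total multiplicity $\sum_k c_k$ is at least $2$ since a single $\gb_k$ with $c_k=1$ would force $\gg_j=\gb_k$, contradicting distinctness). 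First I would apply $v$ to both sides of this relation.

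The key step is the evaluation: since $x$ is cominuscule, there is $v\in\ft$ with $\ga(v)=-1$ for every $\ga\in I(x^{-1})$. Applying $v$ to $\gg_j=\sum_k c_k\gb_k$ gives $-1=\gg_j(v)=\sum_k c_k\,\gb_k(v)=\sum_k c_k(-1)=-\sum_k c_k$, hence $\sum_k c_k=1$. This contradicts $\sum_k c_k\geq 2$, so no such decomposition exists, and every element of $I(x^{-1})$ is integrally indecomposable.

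The only point requiring care is bookkeeping about what "integrally decomposable" means: I must make sure that the definition in Section \ref{s.removable} — "a positive integer linear combination of \emph{other} elements of $\pam$" — indeed forces the total coefficient sum to be at least $2$, which it does precisely because a combination with coefficient sum $1$ is just a single other weight, equal to $\gg_j$, violating "other." I expect this to be the only subtlety; the argument is otherwise a one-line evaluation against $v$. There is no real obstacle here, so I would keep the proof to essentially the displayed computation, perhaps phrasing it as: any relation $\gg_j=\sum c_k\gb_k$ with $\gb_k\in I(x^{-1})\setminus\{\gg_j\}$ and $c_k\in\Z_{>0}$ yields $-1=-\sum c_k$ upon pairing with $v$, forcing $\sum c_k=1$ and hence a single summand equal to $\gg_j$, a contradiction.
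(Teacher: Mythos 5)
Your proof is correct and follows essentially the same route as the paper: evaluate the witnessing functional $v$ with $\ga(v)=-1$ on a putative decomposition and derive $-1=-\sum_k c_k\leq -2$, a contradiction. The extra bookkeeping you do (that "other elements" forces total coefficient sum at least $2$) is exactly the implicit step in the paper's assumption $m\geq 2$, so nothing differs in substance.
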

\begin{proof}
  If $x$ is cominuscule, then there exists $v\in \ft$ such that
  $\ga(v)=-1$ for all $\ga\in I(x^{-1})$. Assume that some $\gb\in
  I(x^{-1})$ is integrally decomposable. Then $\gb=\sum_{i=1}^m\gb_i$,
  where $m\geq 2$, $\gb_i\in I(x^{-1})$. Since $\gb(v)=-1$ and
  $\gb_i(v)=-1$ for all $i$, this leads to a contradiction.
\end{proof}

\begin{remark} \label{r.noncomin}
The converse of the above proposition is false: 
there exist non-cominuscule elements $x$ such that every element of $I(x^{-1})$ is
integrally indecomposable.  The following example is a variation and extension
of \cite[Remark~5.4]{Ste:01}.  In type $D_4$, with the conventions of \cite{Hum:90}, consider the element
$x = s_2 s_1 s_3 s_4 s_2$.   The inversion set $I(x^{-1})$ is equal to
$\{ \gre_1 -\gre_3, \gre_1+\gre_2, \gre_2 - \gre_3, \gre_2 - \gre_4, \gre_2 + \gre_4 \}$.
Every element of $I(x^{-1})$ is integrally indecomposable, but the element 
$x$ is not cominuscule (cf.~\cite[Remark~5.4]{Ste:01}).
Note that \cite{Ste:01} uses a different numbering of the vertices of the Dynkin diagram in which node $3$ has degree
$3$ (see \cite[Remark 2.7]{Ste:01}), so he writes the element $x$ as $s_3 s_1 s_2 s_4 s_3$.
\end{remark}

\begin{definition}
  The maximal parabolic subgroup $P\supseteq B$ is said to be {\em
    cominuscule} if the simple root $\ga_i$ corresponding to $P$
  occurs with coefficient 1 when the highest root of $G$ is written as
  a linear combination of the simple roots.
\end{definition}

If $P$ is cominuscule, then the corresponding flag variety $G/P$ is
said to be cominuscule as well.  We refer the reader to \cite[Chapter
9]{BiLa:00}, \cite[Chapter VI, \S 1, Exercise 24]{Bou:02},
\cite{GrKr:15} for more on cominuscule $G/P$. The following
proposition gives an important class of cominuscule Weyl group
elements.

\begin{proposition}\label{p.comin-p}
  If $x\in W^P$, where $P$ is cominuscule, then $x$ is a cominuscule
  element of $W$.
\end{proposition}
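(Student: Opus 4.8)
The plan is to exhibit an explicit $v\in\ft$ witnessing that $x$ is cominuscule. Let $\ga_i$ be the simple root attached to the maximal parabolic $P$, so $W_P=\langle s_j:j\neq i\rangle$ and $L$ has simple roots $\{\ga_j:j\neq i\}$. Let $\gw_i\w\in\ft$ be the fundamental coweight dual to $\ga_i$, determined by $\langle\ga_j,\gw_i\w\rangle=\gd_{ij}$; then for any root $\gg=\sum_j c_j\ga_j$ one has $\langle\gg,\gw_i\w\rangle=c_i$, the coefficient of $\ga_i$ in $\gg$. I claim that $v=x\gw_i\w$ does the job. Recall the identity $I(x)=-x^{-1}I(x^{-1})$ noted above; applying $-x$ to both sides gives $I(x^{-1})=-x\,I(x)$, so every $\ga\in I(x^{-1})$ may be written $\ga=-x\gg$ for a unique $\gg\in I(x)$. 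Since the pairing between $\ft^{*}$ and $\ft$ is $W$-invariant,
\begin{equation*}
\ga(v)=\langle -x\gg,\,x\gw_i\w\rangle=-\langle\gg,\gw_i\w\rangle=-c_i(\gg).
\end{equation*}
Hence it suffices to show that $c_i(\gg)=1$ for every $\gg\in I(x)$.

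This I would prove by two inequalities. For the upper bound $c_i(\gg)\le 1$: every positive root satisfies $\gg\le\gt$, where $\gt$ is the highest root, so $\gt-\gg$ is a sum of simple roots with nonnegative integer coefficients, whence $c_i(\gg)\le c_i(\gt)$; and $c_i(\gt)=1$ is precisely the hypothesis that $P$ is cominuscule. For the lower bound $c_i(\gg)\ge 1$: if instead $c_i(\gg)=0$, then $\gg$ is a nonnegative combination of $\{\ga_j:j\neq i\}$, i.e.\ a positive root of $L$. But $x\in W^P$ means $\ell(xs_j)>\ell(x)$, equivalently $x\ga_j\in\gP^+$, for every simple reflection $s_j$ of $W_P$; expanding $\gg$ in these simple roots shows that $x\gg$ is a root whose simple-root coordinates are all $\ge 0$, so $x\gg\in\gP^+$, contradicting $\gg\in I(x)$. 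Therefore $c_i(\gg)=1$, so $\ga(v)=-1$ for all $\ga\in I(x^{-1})$, and $x$ is cominuscule.

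I expect no real obstacle; the closest thing to a subtle point is that the lower bound leans on the standard characterization of $W^P$. The ingredients beyond bookkeeping are just the elementary facts that $x\in W^P$ is equivalent to $x\ga_j\in\gP^+$ for all simple roots of $W_P$, and that a $W$-translate of a root with all nonnegative simple-root coordinates is a positive root. (If $G$ is semisimple but not simple, the lower-bound argument already forces $c_i(\gg)\ge 1>0$ for $\gg\in I(x)$, so such $\gg$ lies in the simple factor of $\ga_i$, to which the upper-bound argument applies with $\gt$ the highest root of that factor.)
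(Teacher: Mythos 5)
Your proof is correct, and its core is the same as the paper's: both take the witness $v = x\cdot\gw_i\w$ (the $x$-translate of the fundamental coweight dual to $\ga_i$) and reduce to the statement that every element of $I(x^{-1})$, pulled back by $x^{-1}$, is a negative root in which $\ga_i$ occurs with coefficient $-1$. The difference is in how the two ingredients are justified. The paper gets the containment $I(x^{-1})\subseteq x\,\Phi(U_P^-)$ from the group-theoretic identity $U^-(x)\cap U = U_P^-(x)\cap U$ for $x\in W^P$ (cited from \cite{GrKr:15}, cf.~\cite{Knu:09}), and gets the coefficient-one statement for roots of $\Phi(U_P^-)$ by citing \cite[Lemma 2.8]{GrKr:15}; you instead prove both facts directly in root combinatorics: the identity $I(x^{-1})=-x\,I(x)$ plus the characterization $x\in W^P \Leftrightarrow x\ga_j\in\gP^+$ for simple roots $\ga_j$ of $W_P$ rules out $c_i(\gg)=0$ for $\gg\in I(x)$, and the bound $c_i(\gg)\le c_i(\gt)=1$ against the highest root gives the upper bound, with the semisimple-but-not-simple case correctly deflected to the simple factor containing $\ga_i$. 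So your argument buys self-containedness (no appeal to the unipotent-group lemma or to the external coefficient lemma) at the cost of a little more root-system bookkeeping, while the paper's version is shorter because it leans on results already established in its references; both are sound.
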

\begin{proof}
 If $x\in W^P$, then $U^-(x) \cap U = U_P^-(x) \cap U$ (see the discussion before Lemma 4.1 in
 \cite{GrKr:15}, cf.~\cite{Knu:09}).  Hence
 $$
I(x^{-1}) = \Phi((U^-(x) \cap U) =\Phi( U_P^-(x) \cap U) \subset x \Phi(U_P^-).
$$
 Let $\ga_1,\ldots,\ga_r$ denote the simple roots of $G$; these form
a basis for $\ft^{*}$. Denote the dual basis of $\ft$ by
$\xi_1,\ldots,\xi_r$. Assume that $P$ corresponds to the simple root
$\ga_i$. Since $P$ is cominuscule, \cite[Lemma 2.8]{GrKr:15} implies $\ga_i$ must occur with
coefficient $-1$ in all $\ga\in \Phi(U_P^-)$ (when $\ga$ is written as
a linear combination of the simple roots), so for all such $\ga$, we have
$\ga(\xi_i) = -1$.  It follows that $v=x\xi_i$ satisfies $\ga(v)=-1$ for all
$\ga \in  x \Phi(U_P^-)$.  Hence $\ga(v) = -1$ for all $\ga \in I(x^{-1})$, so
$x$ is a cominuscule element of $W$.
\end{proof}

\begin{remark} \label{r.summary}
The results of this subsection imply that if $P$ is cominuscule and $w\leq x\in W^P$,
then Theorem
\ref{t.gpsummary} characterizes all weights of $T_x\kxwp$ and $T_x\xwp$.  More generally,
suppose $x \in W$ is any cominuscule element (or more generally any element such that each element of
$I(x^{-1})$ is integrally indecomposable).  Then Lemma \ref{l.sch-kl} and Theorem \ref{t.summary} characterize all weights of
$T_x\kxw$ and $T_x X^w$.
If in addition $P \supset B$ is a parabolic subgroup such that $w, x\in W^P$, then Theorem
\ref{t.gpsummary} characterizes all weights of $T_x\kxwp$ and $T_x\xwp$.
\end{remark}


\bibliographystyle{amsalpha}
\bibliography{intremovableroots}

\end{document}